\newtheorem{theorem}{Theorem}
\newtheorem{lemma}{Lemma}
\newtheorem{proposition}{Proposition}
\begin{document}

\begin{frontmatter}

\title{Preservation of dissipativity in dimensionality reduction}
\author[1]{Sergey V. Stasenko\corref{cor1}}\ead{stasenko@neuro.nnov.ru}
\author[2]{Alexander N. Kirdin}\ead{kirdinalexandergp@gmail.com}
\address[1]{Artificial Intelligence Research Center, Lobachevsky University, Nizhny Novgorod, Russia}
\address[2]{Institute of Computational Modelling, Russian Academy of Sciences, Siberian Branch, Krasnoyarsk}
\cortext[cor1]{Corresponding author}

\begin{abstract}
Systems with predetermined Lyapunov functions play an important role in many areas of applied mathematics, physics and engineering: dynamic optimization methods (objective functions and their modifications), machine learning (loss functions), thermodynamics and kinetics (free energy and other thermodynamic potentials), adaptive control (various objective functions, stabilization quality criteria and other Lyapunov functions). Dimensionality reduction is one of the main challenges in the modern era of big data and big models. Dimensionality reduction for systems with Lyapunov functions requires {\it preserving dissipativity}: the reduced system must also have a Lyapunov function, which is expected to be a restriction of the original Lyapunov function on the manifold of the reduced motion. An additional complexity of the problem is that the equations of motion themselves are often unknown in detail in advance and must be determined in the course of the study, while the Lyapunov function could be determined based on incomplete data. Therefore, the projection problem arises: for a given Lyapunov function, find a field of projectors such that the reduction of {\it any} dissipative system is again a dissipative system. In this paper, we present an explicit construction of such projectors and prove their uniqueness. We have also taken the first step beyond the approximation by manifolds. This is required in many applications. For this purpose, we introduce the concept of monotone trees and find a projection of dissipative systems onto monotone trees that preserves dissipativity.
\end{abstract}
\begin{keyword}
Lyapunov function; dissipativity;  dimensionality reduction; dynamical systems; adaptive control; optimization 
\end{keyword}

\date{}

\end{frontmatter}

\section{Introduction}

Dimensionality reduction is one of the main tasks in data analysis and mathematical modelling of complex system \cite{GorYab2013}. Many linear and non-linear methods were developed for solving this problem. Famous Principal Component Analysis (PCA) was invented by K. Pearson in 1901 \cite{Pearson1901} for data modelling and extraction essential features. Despite its very long history, this method still attracts a lot of attention from many researchers with the publication of a large number of practical recommendations and modifications \cite{GreenacrePCA2022}.

Manifold learning is the first non-linear generalization of PCA   \cite{LeeVerley2007, GorbanKegl2008}. Various methods of non-manifold low-dimensional data approximation have been also developed \cite{GorbanZin2010Neural,ZhangPedrycz2013} and used for solving applied problems \cite{GolovenBac2020,ChenPinello2019}. In addition to the injective data approximation methods, when the approximant is embedded in the dataspace, a large family of projective methods has been developed in which only the projection function onto the base space is learnt (see a        brief review with a comparison of modern injective and projective methods in \cite{BacZinovyev2020}).

The development of mathematical models of complex networks requires effective model reduction technology \cite{GorbanKaz2006}. Many methods were created in the field of chemical dynamics \cite{Angeli2009} at the beginning of the twentieth century and then received mathematical justification. Here we should mention the 1956 Nobel Prizes awarded to Semenov and Hinshelwood, for research based on effective dimensionality reduction by the method of quasi-stationary concentrations (for more information, we refer to the reviews of \cite{GorbanYablons2015, Gorban2018} and the textbook \cite{MarinYablon2019}). The very popular machine learning algorithms for nonlinear dimensionality reduction and manifold learning, autoencoders, were invented first for chemical applications \cite{Kramer1991,Kramer1992}.

Two very general mathematical ideas are used in dimensionality reduction for large dynamical systems: \emph{invariant manifold} and \emph{Lyapunov function}. The motion reduced  to an invariant manifold never leaves it. If this manifold is, at the same time, a slow manifold, then the game is over and the reduced description is ready \cite{GorbanKarlin2005, GorbanKarlin2014}. 
Methods of invariant manifold aim to find a good approximation to the unknown slow invariant manifold.

Lyapunov functions monotonically decrease (non increase) along the systems trajectories. They appear in various forms: entropy or free energy (for physical and chemical systems \cite{Jaynes1957, GorbanKarlin2005, Grmela2013, PavelkaGrmela2018}) and informational entropy or various divergences for machine learning \cite{Principe2010, Filippi2010, Nowozin2016, Semenov2019, Ji2020}, generalized fitness (for evolutionary modelling \cite{Gorban2007, KarevaKarev2019,Morozov2019}), loss function \cite{Li..Goldstein2018} (for machine learning), or just general Lyapunov functions for many general problems of control, machine learning and adaptation in dynamical systems \cite{Tyukin2011}.  

Information about the Lyapunov function for large dynamical systems may be much more reliable than the information about dynamic (vector field) itself. In physical and chemical applications we can operate by entropy, free energy and other thermodynamic potentials even when the detailed kinetics is not well-known. Similarly, in machine learning we can define the loss functions before detailed specification of the learning algorithm. In adaptive control and observation problem, we also define the quality functionals before construction of observers and adaptation algorithm.

Definition of the Lyapunov function for a reduced system is very simple: we can just restrict the Lyapunov function of the initial system on the manifold of reduced description. But projection of the original dynamics is not so obvious. Of course, if the manifold of the reduced description is invariant with respect to original dynamics, then the original vector field is tangent to this manifold and belongs to it tangent space. In this case, no projection problem arise. But in practice the manifolds of reduced description are newer invariant beyond oversimplified toy examples. Therefore, we come to the projection problem: how should we project the original vector field onto the tangent space of an ``ansatz manifold'' in order to guarantee preservation of dissipativity: if the time derivative of the Lyapunov function according to the original vector field is non-positive, then the time derivative of the restriction of this Lyapunov function according to the projected vector field should be also non-positive.    

This projection problem becomes interesting and non-trivial if we consider it in the universal settings:
\begin{itemize}
\item Let the vector field be a priory unknown; the only available a priory information is: it has the given Lyapunov function;
\item Let the ansatz manifold be apriori unknown; the only available information is some regularity of intersections with the Lyapunov function levels (the detail follow);
\item Find a projector that transforms any a priory unknown vector field with a given Lyapunov function into a vector field on a regular ansatz manifold equipped with a Lyapunov function, which is a simple restriction of the original Lyapunov function on this ansatz manifold.
 \end{itemize}

Such a universal approach arose from the most universal physical discipline -- thermodynamics. The main example was the theory of strong shock waves. The original (non-reduced) system was the general Boltzmann equation. In the middle of XXth century, Tamm and Mott-Smith proposed a very simple and useful ansatz for analysis of strong shock waves. The distribution function of particle speeds was presented as a linear combination of Maxwellian distribution  at the ``beginning'' ($t\to -\infty$)  and at the ``end'' ($t\to +\infty$). The new variable of  reduced description is just the coefficient of this combination. Then the Boltzmann equation was projected onto this ansatz. The result was useful and sufficiently simple for analytic and numerical analysis (see, for example, the book \cite{Cercignani}). Nevertheless, the projected equation may violate the Second law of thermodynamics: in some situations, entropy decreases \cite{Naka}. In 1977, M. Lampis proposed a projector for the Tamm--Mott-Smith ansatz that guaranteed preservation of dissipativity \cite{Lampis1977}. Later on, Gorban and Karlin proposed a general thermodynamic theory for the  Tamm--Mott-Smith ansatz and similar approximations \cite{GorbanKarlin1992}. In their work, the quasi-one dimensional nature of the Tamm--Mott-Smith ansatz was used. Some generalization we proposed in 1994 for solution the problem of instabilities of post-Navier--Stokes approximations \cite{GorbanKarlin1994}. In 2004, they published the general construction of thermodynamic projector for more general ansatz manifolds \cite{GorbanKarlin2004} and applied it to the Hilbert 6th problem for analysis of hydrodynamic ansatz manifolds in kinetic theory \cite{GorbanKarlin2014}.

There are three surprises:
\begin{enumerate}
\item The universal projector that preserves dissipativity do exist;
\item This projector is unique: for all other projectors there are counterexamples with non-monotone dynamic of Lyapunov function along some trajectories of reduced systems;
\item This projector preserves not only the sign of the Lyapunov function time derivative (that is the initial requirement) but also the exact value of this derivative. This is a rare case of a ``free lunch''.
\end{enumerate}

In our work, we analyse the mathematical problem of preservation of dissipativity in general dynamical models with a given Lyapunov function, give the explicit formula for the universal projector, and prove  that it is unique. In Section \ref{Formal} we give the rigorous problem statement. In Section \ref{Projector} the main theorem about universal projector are proven. In Section \ref{monoton} we consider the model reduction methods beyond manifolds. For this purpose, we introduce  the {\it monotone trees} and  demonstrate how the results of the previous section can be used for unambiguous projecting of dissipative dynamics onto such trees.
In Section Conclusion and outlook we discuss the results and the possible directions of future work.

\section{Notations and formal problem statement \label{Formal}}

Let $U$ be a open convex subset of $n$-dimensional Euclidean space $\mathbb{R}^n$.
The a priori given Lyapunov function $H$ is given in $U$. Assume that:
\begin{itemize}
\item $H$ is twice-differentiable.
\item $H$ is strongly convex in $U$, that is,  the second differential of $H$, $D^2_x(H)$  is uniformly positive definite in $U$: all its eigenvalues $\lambda$ are positive and $\lambda > \alpha >0$ for some $\alpha$ for all $x\in U$.
\end{itemize}

In a coordinate system $x_1, \ldots, x_n$, the second differential is presented by the Hessian matrix 
$$D^2_x(H)=\frac{\partial^2 H}{\partial x_i \partial x_j}.$$ 

A $m$-dimensional ansatz manifold $M$ is a twice-differentiable immersed $m$-dimensional submanifold in $U$.   

Because most of our problems are local, it is convenient to work with a coordinate chart of $M$. 
It is an injective immersion $F:B_m \to U$, where  $B_m$ is an open convex subset of $m$-dimensional Euclidean space $\mathbb{R}^m$. This means that $F$ is injective and the differential of $F$ at each point $p\in B_m$, $D_p(F)$,  is an injective linear map from  $\mathbb{R}^m$ to $\mathbb{R}^n$. 

For given  coordinate  systems in spaces $\mathbb{R}^m$ ($p_1, \ldots, p_n$) and $\mathbb{R}^n$ ($x_1, \ldots, x_n$), the map $F$ is given by $n$ functions on $B_m$: $x_i=f_i (p_1,\ldots , p_m)$ ($n=1, \ldots, n$). 

The differential $D_p(F)$ is presented by the matrix of partial derivatives $\left.\frac{\partial f_i}{\partial p_j}\right|_p$. 

$F$ is immersion if and only if rank of $D_p$ is constant and equal to $m$: ${\rm rank}(D_p(F))=m$ for all $p\in B_m$.

Of course, we can always assume that $B_m$ is an open $m$-dimensional ball. Nevertheless, a bit more general definition with open convex sets may be useful for some examples.  

There is no simple general local criterion for the injectivity of $F$, but for some practical examples it can often be proved without difficulty. (For example, for the Tamm--Mott-Smith anzatz discussed above: If for two pairs of Maxwellians two linear combinations  of them with non-zero coefficients coincide, then these pairs coincide too because of linear independence of different Maxwellians. This means injectivity of the Tamm--Mott-Smith ansatz.) 

The tangent space to $M$ at a point $x=F(p)$ is the image of the differential $D_p(F)$: $T|_{F(p)}F(B_m)={\rm im} D_p(F)$. In the coordinate form, it can be presented as a linear span of linear independent vectors of partial derivatives (columns of the matrix $\left.\frac{\partial f_i}{\partial p_j}\right|_p$): 
$$T_{F(p)}={\rm span} \left\{\left(\frac{\partial x_1}{\partial p_i},\frac{\partial x_2}{\partial p_i}\ldots \frac{\partial x_n}{\partial p_i}\right), \quad i=1,\ldots, m\right\}\ .$$

The tangent space of $U$ at any point may be considered just as $\mathbb{R}^n$. The first differential of $H$ in $U$, $D_x H$, is a linear functional: for each $y\in \mathbb{R}^n$
$$D_x (H) (y)=\sum_i \frac{\partial H}{\partial x_i} y_i .$$

Restriction of $H$ on $M$ at point $x=F(p)$ is just the value of $H$: $H|_M(F(p))= H(F(p))$. Differential of restriction of $H$ on $M$ at point $x=F(p)$ is restriction of the functional $D_x (H)$ on $T_{F(p)}$. Formally, if a vector $y\in T_{F(p)}$, this means  
$$y=\sum_{i=1}^m a_i \left(\frac{\partial x_1}{\partial p_i},\frac{\partial x_2}{\partial p_i}\ldots \frac{\partial x_n}{\partial p_i}\right)$$
for some numbers $a_1,\ldots , a_m$. For this $y$,
$$D_x (H_M) (y)=\sum_{i=1}^m a_i \sum_{j=1}^n \frac{\partial H}{\partial x_j} \frac{\partial x_j}{\partial p_i}.$$

If we transform the basis in $T|_{F(p)}$ to $\{e_1, \ldots , e_m\}$ then for calculation of $D_x (H_L) (y)$ for $y=\sum b_i e_i$ we should use representation of $e_i$ in the old basis to  find  $D_x (H) (e_i)$ and then calculate $D_x (H_M) (y)=\sum b_i D_x (H) (e_i)$.

The second differential of $H$ induces  a Riemannian metric on $U$ for which the Hessian of $H$ is the metric tensor. This is the so-called Shahshahani metric \cite{Shahshahani1972}. The inner product  associated with this metric in the tangent space at a point $x\in U$ is defined as
$$\langle y | z \rangle_x = (y,D^2_x(H) z),$$
or in the coordinate form
$$\langle y | z \rangle_x =\sum_{i,j=1}^n y_i \left.\frac{\partial^2 H}{\partial x_i \partial x_j}\right|_x z_j$$

Of course, the Shachshahani metric generated by the second differential of $H|_M$ is also defined on the submanifold $M$. We use notation $\mathcal{P}^{\perp}_x$ for the orthogonal projectors of $\mathbb{R}^n$ onto $T_x(M)$ in the Shahshahani metrics $\langle \cdot |\cdot \rangle_x$. These projectors do not solve the main problem but are used in the solution.

Consider a vector field $W(x)$ in $U$. If it has Lyapunov function $H$ then the derivative of $H$ along $W(x)$ is non-positive. In our notations, it means that 
\begin{equation}\label{Diss1}
D_x (H) (W(x))\leq 0 \mbox{ for all } x \in U ,
\end{equation}
or in the coordinate form
\begin{equation}\label{Diss2}
\sum_i w_i(x) \left.\frac{\partial H}{\partial x_i}\right|_x \leq 0
\end{equation}
if $w_i(x)$ are the components of the vector field $W(x)$. 

Consider a vector field $Q(x)\in T_{F(p)}$ defined at points $x=F(p)$ of  the submanifold $M$. Definition of the derivatives of $H_M$ along this field is the same as (\ref{Diss1}), (\ref{Diss2}). Taking into account that $x\in M$ and $H_M=H$ on $M$, we write    
$$D_x (H_M) (Q(x))=D_x(H)(Q(x)).$$

The problem is: For $x\in M$ find projectors $\mathcal{P}_x: \mathbb{R}^n \to T_x M$ such that for any dissipative vector field $W(x)$ (\ref{Diss1}), (\ref{Diss2}) with Lyapunov function $H$ the projected vector field $Q(x)$ is also dissipative with the restriction $H$ on $T_x(M)$, $H_M$. 

The projectors $\mathcal{P}_x$ depend on point $x\in M$. They also depend on the tangent space $ T_x M$.  We require also that the unknown yet field of projectors should be smooth. Usually, it is clear, for which manifold $M$ and tangent space $ T_x M$ a projector is $\mathcal{P}_x$ build, and we omit the cumbersome notation, but if it is necessary, we can use $\mathcal{P}_{x,M}$ for projector of $\mathbb{R}^n$ onto
$ T_x M$.

\section{Universal projector for submanifolds \label{Projector}}

\subsection{Linear submanifolds, linear vector fields, and quadratic Lyapunov functions}

Linear vector fields with quadratic Lyapunov functions and linear ansatz submanifold provide us with the simplified version of the problem of dissipative projection and, at the same time, demonstrate how to use the Shahshahani metrics for the construction of the projector.

Let $U=\mathbb{R}^n$ and $H$ be the positive definite quadratic form $H(x)=
(x, G x)$, where $G$ is a positively definite symmetric operator and $(\cdot , \cdot )$ is the standard inner product. We select the auxiliary (Shahshahani) inner product $\langle y|z \rangle = (y, G z) = (G y, z)$.

Consider a linear vector field $Q(x)=Qx$, where $Q$ is a linear operator. It is well-known that this vector field has Lyapunov functions $H$ if and only if the time derivative of $H$ along $Q$ is non-positive: for all $x\in \mathbb{R}^n$
\begin{equation}
\begin{split}
\frac{d}{dt}(x,Gx)&=(\dot{x} , Gx)+ (x, G \dot{x})\\&= (Qx,Gx)+(x, GQx)=(x,(Q^TG+GQ)x)\leq 0. 
\end{split}
\end{equation}
This means that the symmetric operator $Q^TG+GQ$ is negative semi-definite. The equivalent inequality is 
\begin{equation}\label{DissLin}
\langle Qx|x \rangle \leq 0 \mbox{ for all } x\in \mathbb{R}^n .
\end{equation}
A linear ansatz manifold is just a linear subspace in $\mathbb{R}^n$. In this case we can use for the tangent space $T(M)$ the same notation $M$.  A projector $\mathcal{P}: \mathbb{R}^n\to M$ is a surjective linear operator (${\rm im} \mathcal{P}=M$) with the property $\mathcal{P}^2=\mathcal{P}$, or, which is the same, such a linear operator $\mathcal{P}: \mathbb{R}^n\to M$ that its restriction on $M$ is the unite operator: $\mathcal{P}|_M={\rm id}_M$.  

For a given linear anzatz manifold $M$, projector $\mathcal{P}: \mathbb{R}^n\to M$ and a linear vector field $Q(x)=Qx$ on $\mathbb{R}^n$, the projected vector field on $M$ is $\mathcal{P}Qx$, $x\in M$. It is convenient to represent this projection in a more symmetric form: $\mathcal{P}Qx=\mathcal{P}Q\mathcal{P}x$ for  $x\in M$.

\begin{theorem}\label{Theor:Linear}
Proector $\mathcal{P}: \mathbb{R}^n\to M$ projects any linear vector field $Qx$ with quadratic Lyapunov function $H(x)= \langle x | x\rangle$ into a linear vector field  on a linear subspace $M$ with the same quadratic Lyapunov function $H_M(x)= \langle x | x\rangle$ (for $x\in M$) if and only if $\mathcal{P}$ is an orthogonal projector $\mathcal{P}^{\perp}$ in the Shahshahani inner product $\langle y | z\rangle$.
\end{theorem}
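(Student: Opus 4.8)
The plan is to prove both directions of the equivalence, treating the orthogonal projector in the Shahshahani metric as the distinguished object.

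Let me set up the problem carefully. We have $H(x) = \langle x|x\rangle = (x, Gx)$, and the dissipativity condition for a linear field $Q$ is $\langle Qx|x\rangle \le 0$ for all $x$. After projection by $\mathcal{P}$, the field on $M$ is $\mathcal{P}Q\mathcal{P}x$, and we need $\langle \mathcal{P}Q\mathcal{P}x | x\rangle \le 0$ for all $x \in M$, i.e., $(\mathcal{P}Qx, Gx) \le 0$ for $x \in M$.

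Key observation: For the orthogonal projector $\mathcal{P}^\perp$ in the Shahshahani metric, by definition $\langle \mathcal{P}^\perp y | z\rangle = \langle y | z\rangle$ whenever $z \in M$ (since $\mathcal{P}^\perp$ acts as identity on $M$ and is self-adjoint w.r.t. $\langle\cdot|\cdot\rangle$). So for $x \in M$:
$$\langle \mathcal{P}^\perp Q x | x\rangle = \langle Qx | \mathcal{P}^\perp x\rangle = \langle Qx | x\rangle \le 0.$$
This proves the "if" direction — and note it proves more: the exact value of the derivative is preserved, matching the "free lunch" mentioned in the introduction.

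For the "only if" direction, I need to show any other projector fails. So suppose $\mathcal{P}$ is a projector onto $M$ with $\mathcal{P} \ne \mathcal{P}^\perp$, and derive a counterexample.

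So the structure is clear: the "if" direction is a short computation using self-adjointness of $\mathcal{P}^\perp$; the "only if" direction requires constructing a counterexample for each non-orthogonal projector.

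Let me sketch the "only if" part. Write $\mathcal{P} = \mathcal{P}^\perp + R$ where $R$ is the "difference" operator. Since $\mathcal{P}$ is a projector onto $M$ with $\mathcal{P}|_M = \mathrm{id}$, we get $R|_M = 0$, so $R$ vanishes on $M$ and maps into $M$. If $\mathcal{P} \ne \mathcal{P}^\perp$, then $R \ne 0$, so there is some $v \notin M$ (in fact $v$ in a complement) with $Rv \ne 0$, $Rv \in M$. The requirement is that $\langle \mathcal{P}Qx|x\rangle \le 0$ holds for all $x \in M$ and all dissipative $Q$. The idea is to choose a $Q$ that is dissipative (i.e. $\langle Qy|y\rangle \le 0$ for all $y$) but for which the projected field violates the inequality at some $x\in M$, exploiting that $R$ picks up a component of $Qx$ lying outside the "$H$-orthogonal" direction.

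Here is the engine of the counterexample: pick $x_0 \in M$ with $x_0 \ne 0$ and choose $Q$ so that $Qx_0$ has a prescribed value. A clean choice is to make $Qx_0$ equal to something $\langle\cdot|\cdot\rangle$-orthogonal to $x_0$ (so the original field is neutral, $\langle Qx_0|x_0\rangle = 0$, hence dissipative at $x_0$) while arranging $\langle \mathcal{P}Qx_0|x_0\rangle = \langle Qx_0 | \mathcal{P}^* x_0\rangle > 0$, where $\mathcal{P}^*$ is the Shahshahani-adjoint. The point is that $\mathcal{P} \ne \mathcal{P}^\perp$ is equivalent to $\mathcal{P}^*|_M \ne \mathrm{id}_M$ (the orthogonal projector is exactly the one that is self-adjoint), so there exists $x_0 \in M$ with $\mathcal{P}^* x_0 \ne x_0$; setting $u = \mathcal{P}^* x_0 - x_0 \ne 0$, I would choose $Qx_0$ proportional to the $\langle\cdot|\cdot\rangle$-projection of $u$ onto the $H$-orthogonal complement of $x_0$ so that $\langle Qx_0|x_0\rangle = 0$ but $\langle Qx_0|u\rangle > 0$. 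Extending this to a globally dissipative linear operator $Q$ (e.g. taking $Q$ antisymmetric in the Shahshahani metric so that $\langle Qy|y\rangle = 0$ identically) yields the desired violation.

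The main obstacle I anticipate is the "only if" direction — specifically, producing a single linear operator $Q$ that is \emph{globally} dissipative (the inequality $\langle Qy|y\rangle \le 0$ must hold for all $y$, not just at $x_0$) while still forcing a positive value of $\langle \mathcal{P}Qx_0|x_0\rangle$. Restricting to Shahshahani-antisymmetric $Q$ (those with $\langle Qy|y\rangle \equiv 0$) is the cleanest way to guarantee global dissipativity for free, and the remaining task is to check that the antisymmetric operators are rich enough to realize the required value $Qx_0$; since $u$ is $\langle\cdot|\cdot\rangle$-orthogonal to its target direction by construction, such an antisymmetric $Q$ exists precisely when $n \ge 2$, which is the only nontrivial case anyway. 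Once this $Q$ is in hand, the contradiction with the dissipativity requirement for the projected field is immediate, completing the characterization.
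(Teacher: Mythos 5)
Your proof is correct, and while the ``if'' direction coincides with the paper's (the same one-line self-adjointness computation $\langle \mathcal{P}^{\perp}Qx|x\rangle=\langle Qx|\mathcal{P}^{\perp}x\rangle=\langle Qx|x\rangle$ for $x\in M$), your ``only if'' direction takes a genuinely different route. The paper detects non-orthogonality through the kernel: it picks $y\in M^{\perp}$ with $\mathcal{P}y\neq 0$ and builds a one-parameter family of rank-one, Shahshahani-symmetric, negative semi-definite fields $A_a x=-(\mathcal{P}y-ay)\langle \mathcal{P}y-ay|x\rangle$, showing the projected field fails dissipativity for $a>1$. You instead detect it through the adjoint: $\mathcal{P}\neq\mathcal{P}^{\perp}$ iff $\mathcal{P}^{*}|_M\neq \mathrm{id}_M$, so there is $x_0\in M$ with $u=\mathcal{P}^{*}x_0-x_0\neq 0$, and you use a single Shahshahani-skew operator $Q$ (e.g.\ the rank-two map $Qz= w\,\langle x_0|z\rangle/\langle x_0|x_0\rangle - x_0\,\langle w|z\rangle/\langle x_0|x_0\rangle$ with $w$ the component of $u$ orthogonal to $x_0$), giving $\langle Qz|z\rangle\equiv 0$ but $\langle \mathcal{P}Qx_0|x_0\rangle=\langle w|u\rangle=\langle w|w\rangle>0$. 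Each approach buys something: yours shows that even \emph{conservative} fields (the boundary case $dH/dt\equiv 0$) already force orthogonality, and it needs no parameter family; the paper's counterexamples are manifestly gradient-like strictly dissipative fields, and the identical family $B_a(x)$ is reused verbatim in the near-equilibrium Theorem~\ref{Theo:NearEq}, so it transfers directly to the nonlinear setting. One step you left implicit should be spelled out: you need $w\neq 0$, i.e.\ $u\not\parallel x_0$. This does hold, but deserves a line: if $u=\lambda x_0$ then $\mathcal{P}^{*}x_0=(1+\lambda)x_0$, and since $\mathcal{P}^{*}$ is a projector its eigenvalues lie in $\{0,1\}$; the value $0$ would put $x_0\in\ker\mathcal{P}^{*}=M^{\perp}$, impossible for $0\neq x_0\in M$ in a positive definite metric, so $\lambda=0$, contradicting $u\neq 0$. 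With that observation added, your argument is complete.
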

\begin{proof}
Assume that the projector $\mathcal{P}$ is an orthogonal projector in the Shahshahani metric (image $\mathcal{P}$ is orthogonal to kernel $\mathcal{P}$). An equivalent condition is that projector $\mathcal{P}$ is a self-adjoint operator in this inner product: $\langle \mathcal{P} y | z\rangle=\langle y | \mathcal{P} z\rangle$ for all $y,z \in  \mathbb{R}^n$. Let an operator $Q$ satisfy the dissipativity condition (\ref{DissLin}). Then for its projection onto $M$ we get $\langle \mathcal{P}Q\mathcal{P}x|x \rangle=\langle Q\mathcal{P}x|\mathcal{P}x \rangle$ because $ \mathcal{P}$ is self-adjoint. This form is negative semi-definite because  $\langle Qy|y \rangle \leq 0$ for all $y\in \mathbb{R}^n$.

Assume now that a projector $\mathcal{P}:\mathbb{R}^n \to M$ is not an orthogonal projector in the Shahshahani metric. Then the orthogonal complement to $M= {\rm im}\mathcal{P}$ does not coincide with the kernel of $\mathcal{P}$ and there exists such a vector $y\in  \mathbb{R}^n$ that $\langle y | z\rangle = 0$ for all $z\in M$ but $Py\neq 0$. 
Consider an one-parametric family $ A_a$ of auxiliary rank one operators: for any $x \in  \mathbb{R}^n$
$$ A_a x=- (\mathcal{P}y-ay)\langle \mathcal{P}y-ay | x \rangle .$$
These operators are negatively semi-definite: $\langle x| A_a x \rangle =-\langle x | \mathcal{P}y-ay \rangle^2\leq 0$. Calculate the projection of these operators onto $M$. Note that for $x\in M$
$A_a  x= -(\mathcal{P}y-ay) \langle \mathcal{P}y-ay | x \rangle=-(\mathcal{P}y-ay) \langle \mathcal{P}y| x \rangle$ because $\langle y | x \rangle=0$. Projection of these vectors on $M$ gives $\mathcal{P}A_a  x =-(1-a)\mathcal{P}y \langle \mathcal{P}y| x \rangle$. For $a>1$ these operators are not negatively semi-definite: $\langle x |\mathcal{P}A_a  x\rangle=(a-1)\langle \mathcal{P}y | x\rangle^2$.
\end{proof}

Thus, for linear submanifolds (subspaces) and quadratic Lyapunov functions preservation of dissipativity in reduced systems requires orthogonal projection in the Shahshahani metrics. The next step is the analysis of the problem near the equilibrium that is the global minimizer of $H$ in $U$.

\subsection{Extension of dissipative vector fields}

Transition from local analysis (in a vicinity of a point or a compact) to global statements is a very common problem. In general topology, continuation of a continuous function from a closed subset to the entire space is given by the Tietze--Urysohn--Brouwer extension theorem. For differentiable functions,  the Whitney extension theorem states that it is possible to extend a differentiable function with given derivatives from a closed subset in $\mathbb{R}^n$ to the whole space \cite{Whitney1944, Malgrange1967}. 

We have to extend a locally given twice differentiable dissipative vector field to such a field in the entire $U$. The idea is very simple: we will use the classical theorems about extension of differentiable functions and ``glue'' the locally given dissipative field $Q(x)$ with the globally given field ${\rm grad} H$.   

\begin{theorem}\label{TheorExtens}
Let $K$ be compactly contained in $U$: $K \Subset U$, and its $\varepsilon$ neighbourhood $K_{\varepsilon}=K+\varepsilon B^n$ (where $B^n$ is the unite ball in $\mathbb{R}^n$) is also compactly contained in $U$. If $Q$ is a dissipative vector field in $K_{\varepsilon}$ then there exist a dissipative vector field $Q^{\rm ext}$ in $U$ such that $Q(x)=Q^{\rm ext}(x)$ for $x\in K$. 
\end{theorem}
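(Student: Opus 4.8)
The plan is to produce one globally defined dissipative field and then \emph{glue} it to the given local field $Q$ by a smooth cutoff, using that dissipativity is a convex — indeed linear — condition on the field. The natural global field is the anti-gradient $-{\rm grad}\, H$: at every $x\in U$ one has $D_x(H)(-{\rm grad}\, H)=-\sum_i\left(\frac{\partial H}{\partial x_i}\right)^2\le 0$, so $-{\rm grad}\, H$ is dissipative on all of $U$. The pivotal remark is that $D_x(H)$ is a \emph{linear} functional of its vector argument: if $D_x(H)(v_1)\le 0$ and $D_x(H)(v_2)\le 0$, then for any weights $t_1,t_2\ge 0$ we get $D_x(H)(t_1 v_1+t_2 v_2)=t_1 D_x(H)(v_1)+t_2 D_x(H)(v_2)\le 0$. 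Thus any pointwise non-negative combination of dissipative fields is again dissipative.

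Next I would fix a smooth cutoff $\varphi\in C^\infty(U)$ with $0\le\varphi\le 1$, $\varphi\equiv 1$ on $K$, and with $\mathrm{supp}\,\varphi$ a compact subset of $K_{\varepsilon}$ (for example $\varphi\equiv 0$ outside $K_{\varepsilon/2}$, whose closure is compactly contained in $K_\varepsilon$). The existence of such $\varphi$ is the smooth Urysohn lemma, the elementary case of the extension theorems quoted above that is actually needed here. Define
$$Q^{\rm ext}(x)=\varphi(x)\,Q(x)-\bigl(1-\varphi(x)\bigr)\,{\rm grad}\, H(x)\ \ (x\in K_\varepsilon),\qquad Q^{\rm ext}(x)=-{\rm grad}\, H(x)\ \ (x\in U\setminus\mathrm{supp}\,\varphi).$$
On the overlap $K_\varepsilon\setminus\mathrm{supp}\,\varphi$ we have $\varphi\equiv 0$, so the two expressions agree and $Q^{\rm ext}$ is well defined on $U$. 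Since $\varphi\equiv 1$ on $K$, the first formula gives $Q^{\rm ext}=Q$ there, as required.

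It then remains to check dissipativity of $Q^{\rm ext}$ everywhere, which follows from the convexity remark: wherever $\varphi(x)>0$ we have $x\in\mathrm{supp}\,\varphi\subset K_\varepsilon$, so $Q(x)$ is defined and $D_x(H)(Q(x))\le 0$, while $-{\rm grad}\, H$ is dissipative at every point; hence $Q^{\rm ext}(x)$ is a non-negative combination (weights $\varphi(x)$, $1-\varphi(x)$) of vectors on which $D_x(H)$ is $\le 0$, and $D_x(H)(Q^{\rm ext}(x))\le 0$. Where $\varphi(x)=0$ the field equals $-{\rm grad}\, H$ and is dissipative directly. The one place that wants care is regularity: since $-{\rm grad}\, H$ loses one derivative relative to $H$, the glued field is only guaranteed to be $C^1$ when $H$ is merely $C^2$. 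If a twice-differentiable extension is demanded, it suffices to require $H\in C^3$ (so ${\rm grad}\, H\in C^2$) or to replace $-{\rm grad}\, H$ outside $K$ by any globally defined dissipative field of the desired regularity. I expect this regularity bookkeeping, and not the gluing, to be the only genuine obstacle: once the linearity of $D_x(H)$ and the global dissipativity of the anti-gradient are noted, preservation of dissipativity through the cutoff is immediate.
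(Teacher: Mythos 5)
Your proof is correct and takes essentially the same approach as the paper: a smooth cutoff equal to $1$ on $K$ and vanishing outside $K_{\varepsilon}$, with $Q^{\rm ext}$ formed as a pointwise non-negative combination of $Q$ and a globally dissipative field, dissipativity following from the linearity of $D_x(H)$ in its vector argument. The only difference is that the paper keeps the outer field as an arbitrary globally dissipative $W$ instead of instantiating $-\mathrm{grad}\,H$, which automatically sidesteps the regularity bookkeeping you flag at the end (and which your own closing remark already resolves in the same way).
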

\begin{proof}
According to the extension theorems for differentiable functions, there exists such an infinitely smooth function $f_{K, \varepsilon}$ on $U$ that $f_{K, \varepsilon}(x)\geq 0$ in $U$, $f_{K, \varepsilon}(x)=1$ for $x\in K$ and $f_{K, \varepsilon}(x)=0$ for $x\notin K_{\varepsilon}$. Define $Q^*(x)= f_{K, \varepsilon}(x)Q(x)$ for $x\in K_{\varepsilon}$ and $Q^*(x)=0$ for $x\notin K_{\varepsilon}$. For any globally dissipative vector field $W$ in $U$, the linear combination $Q^{\rm ext}(x)=Q^*(x)+ (1-f_{K, \varepsilon}(x))W$ is a globally dissipative vector field in $U$. If $x\in K$ then $Q^{\rm ext}(x)=Q(x)$, and if $x\notin K_{\varepsilon}$ then $Q^{\rm ext}(x)=W(x)$.
\end{proof}

This theorem allows us to discuss locally dissipative vector fields and then extend the results to the entire $U$. In particular, it gives the possibility to extend Theorem \ref{Theor:Linear} from exactly linear submanifolds, linear vector fields, and quadratic Lyapunov functions  to vicinities of equilibria with  almost linear submanifolds, almost linear vector fields, and almost quadratic Lyapunov functions.

A special case is the extension of a dissipative vector field from a submanifold in $U$ to entire $U$. Further, we use the extension of locally given vector fields in a vicinity of a point  $x\in M$. Let $Q$ be a smooth dissipative vector field on $M$ in a $\varepsilon$ vicinity of $x$  that is $D_y (H_M)(Q)\leq 0$ for $y\in M$ and $\|x-y\|<\varepsilon$ ($\varepsilon>0$).

\begin{proposition}\label{ProposExtend}
\begin{enumerate}
\item If $D_x(H_M)(Q(x))< 0$ then there exist a dissipative vector field $Q^{\rm ext}$ in $U$ such that $Q(y)=Q^{\rm ext}(y)$ for $y\in M$ and $\|x-y\|< \delta$ for some $\delta>0$.
\item If  $Q(x)= 0$ but  $D_y(H_M) (Q(y))<-\lambda \|x-y\|$ for $y\in M$, $\|x-y\|< \varepsilon$ and some  $\lambda>0$ then there exist a dissipative vector field $Q^{\rm ext}$ in $U$ such that $Q(y)=Q^{\rm ext}(y)$ for $y\in M$ and $\|x-y\|< \delta$ for some $\delta>0$.
\end{enumerate}
\end{proposition}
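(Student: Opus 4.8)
The plan is to reduce both parts to a single \emph{local, full-dimensional} extension near $x$ and then close with the already-proven globalization Theorem~\ref{TheorExtens}. First I would set up a tubular neighbourhood of $M$ about $x$: using the Shahshahani-orthogonal projector $\mathcal{P}^{\perp}_{x}$ and the associated nearest-point projection $\pi$ onto $M$, every $y$ near $x$ splits as $y=\pi(y)+\nu(y)$ with $\nu(y)$ Shahshahani-normal to $T_{\pi(y)}M$ and $\nu(y)=0$ exactly on $M$. I extend $Q$ to an arbitrary smooth field $\tilde Q$ on this neighbourhood with $\tilde Q|_M=Q$ and track the \emph{dissipation defect} $\phi(y)=D_y(H)(\tilde Q(y))$, where $D_y(H)(v)=(\nabla H(y),v)$ is the standard pairing. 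By hypothesis $\phi\le 0$ on $M$, but $\phi$ may be positive off $M$, and the entire task is to cancel these positive excursions by a correction that vanishes on $M$ so that $Q$ is untouched there. Once I have a field $Q^{\mathrm{ext}}$ with $D_y(H)(Q^{\mathrm{ext}})\le 0$ on some ball $B(x,\delta)$, I take $K=\overline{B(x,\delta/2)}$ and apply Theorem~\ref{TheorExtens} to obtain a globally dissipative field on $U$ that still equals $Q$ on $K\cap M$, which is all that is claimed.

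Part~1 is then immediate by continuity: $D_x(H_M)(Q(x))<0$ means $\phi(x)<0$, and since $\phi$ is continuous on the full neighbourhood, $\phi<0$ on some $B(x,\delta)$. Thus $\tilde Q$ is \emph{already} dissipative there, no correction is needed, and the globalization step finishes the argument.

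Part~2 carries the real content, because $Q(x)=0$ forces $\phi(x)=0$ and continuity alone is useless. Here I would add an uphill correction whose strength is proportional to the transverse distance, for instance $Q^{\mathrm{ext}}(y)=\tilde Q(y)-\beta\,\sigma(\|\nu(y)\|)\,\nabla H(y)/\|\nabla H(y)\|$, where $\sigma$ is a smooth nonnegative function vanishing to first order at $0$ (a smoothing of $\|\nu\|$) and $\beta>0$ a constant. This correction vanishes on $M$, so $Q^{\mathrm{ext}}=Q$ there, and off $M$ it contributes $-\beta\,\sigma(\|\nu\|)\,\|\nabla H(y)\|$ to the dissipation. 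The verification is a contest between a Lipschitz/Taylor estimate $\phi(y)\le\phi(\pi(y))+L\|\nu(y)\|$ bounding the error incurred by leaving $M$, and the quantitative reserve $\phi(\pi(y))<-\lambda\|x-\pi(y)\|\le 0$ supplied on $M$ by the hypothesis. Choosing $\beta$ large relative to $L$ and to a lower bound $c_0$ for $\|\nabla H\|$ near $x$, the correction dominates the transverse error and I obtain $D_y(H)(Q^{\mathrm{ext}})\le\phi(\pi(y))\le 0$ on a ball; globalization then completes Part~2.

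The hard part, and the reason the quantitative rate $\lambda$ is built into the hypothesis, is the \emph{matching of orders} between reserve and transverse error, which becomes delicate exactly at the equilibrium that is the global minimizer of $H$. There $\nabla H(x)=0$, the uphill direction degenerates, and the correction above supplies only $\sim\|\nu\|\,\|\nabla H\|\sim\|\nu\|\,\|y-x\|$, which is quadratic and can no longer absorb a first-order-in-$\nu$ error. I expect to handle this by two refinements: choose the smooth extension $\tilde Q$ so that its normal derivative annihilates the first-order transverse growth of $\phi$ on $M$ (feasible because both $Q$ and $\nabla H$ vanish at the equilibrium, so $\partial_\nu\phi=O(\|\pi(y)-x\|)$ there, pushing the error to higher order), and replace the uphill correction by one directed toward the equilibrium, whose dissipation is bounded below by strong convexity via $(\nabla H(y),y-x)\ge\alpha\|y-x\|^2$. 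Reconciling these exponents so that the $-\lambda$ reserve on $M$ genuinely beats the reconstructed transverse error is the main obstacle; away from the equilibrium, and in Part~1 everywhere, only continuity and Theorem~\ref{TheorExtens} are needed.
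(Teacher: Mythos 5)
Your Part~1 is correct and is essentially the paper's argument in simplified form: the paper also reduces to a local extension followed by Theorem~\ref{TheorExtens}, except that it constructs the local extension via the rectification theorem (flattening $Q$ to a constant field), whereas you observe, correctly, that any smooth extension works because $\phi(x)<0$ propagates by continuity. Part~2, however, has a genuine gap, and it sits exactly where you put the weight. Your correction term $-\beta\,\sigma(\|\nu(y)\|)\,\nabla H(y)/\|\nabla H(y)\|$ cannot be made to work as designed: the distance-to-$M$ function $\|\nu(y)\|$ is not differentiable on $M$, and composing with $\sigma$ vanishing to \emph{first} order at $0$ preserves that non-smoothness, so $Q^{\rm ext}$ fails the standing regularity assumption on vector fields (Theorem~\ref{TheorExtens} glues smooth fields; it does not repair this). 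Conversely, any smooth \emph{nonnegative} coefficient vanishing on $M$ attains a minimum there, so its gradient vanishes on $M$ and the coefficient is $O(\|\nu\|^2)$ --- too weak to absorb your first-order error bound $L\|\nu\|$. So the correction is either non-smooth or insufficient, on every fiber, not only at the equilibrium.

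The way out --- and the paper's actual mechanism --- is to choose the extension $\tilde Q$ well enough that \emph{no correction is needed}, by exploiting $Q(x)=0$, which makes $\|Q(p)\|=O(\|p-x\|)$ for $p\in M$. The paper works in coordinates where $M$ is a coordinate plane, uses strong convexity and the implicit function theorem to locate the critical point $x^*(y_{m+1},\dots,y_n)$ of $H$ on each parallel slice, and extends by the recentred translation $Q_n(y)=Q\bigl(y-x^*(y_{m+1},\dots,y_n)\bigr)$; in your tubular-neighbourhood language the same effect is achieved by the fiberwise-constant extension $\tilde Q(y)=Q(\pi(y))$, for which the transverse error is not $L\|\nu\|$ but $O\bigl(\|\nu\|\,\|Q(\pi(y))\|\bigr)=O\bigl(\|\nu\|\,\|\pi(y)-x\|\bigr)$, which the hypothesis' margin $-\lambda\|\pi(y)-x\|$ dominates on a small neighbourhood, and on the fiber through $x$ the field vanishes identically. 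Your crude uniform bound $\phi(y)\le\phi(\pi(y))+L\|\nu(y)\|$ discards precisely this factor $\|\pi(y)-x\|$, and that is what manufactures both the need for a correction and your ``hard case''. Finally, note that the equilibrium case you leave unresolved is vacuous: at $x=x^{\rm eq}$ one has $D_yH=O(\|y-x\|)$ by twice-differentiability, while $Q(x)=0$ and continuity give $\|Q(y)\|=o(1)$, so $D_y(H_M)(Q(y))=o(\|y-x\|)$ and the hypothesis $D_y(H_M)(Q(y))<-\lambda\|x-y\|$ cannot hold there; your last paragraph is therefore effort spent on an empty case, while the genuinely needed repair (the sharpened transverse estimate above, which you only gesture at as an equilibrium refinement) is left out of the main argument.
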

\begin{proof}
Locally, in a small vicinity of $x$ in $U$, we can introduce such a coordinate systems $y=(y_1, \ldots , y_n)$ that $M$ is given by  the equations $y_{m+1}= \ldots = y_n=0$, where $m=\dim M$ ($\|y- x\|< \kappa$, $y \in U$ for some $\kappa> 0$). Locally, for $\|y- x\|< \kappa$, $(y_1, \ldots , y_m)$ is a coordinate system on $M$. Let us use this coordinate system.

Let us prove statement 1. If $Q(x)\neq 0$ then $x$ is not a critical point of $H_M$ on $M$. Indeed, if $x$ is a critical point of $H_M$ on $M$ then,  because strong convexity of $H$, $x$ is a point of local minimum of $H_M$ on $M$, and  $D_{x+\gamma z}(H_M))(Q(x+\gamma z))>0$ and $D_{x-\gamma z}(H_M))(Q(x-\gamma z))<0$ for some $z=(z_1, \ldots, z_m, ),\ldots,0$ and sufficiently small $\gamma>0$.

Let us use the standard theorem about rectification of vector fields (see, for example, \cite{Arnold1992,Arnold2012}. In a small vicinity of $x$ we can transform the coordinate system $(y_1, \ldots, y_m)$ on $M$ in such a way that $Q(y)=(1,0,\ldots, 0)$ ($m-1$ zeros, $y\in M$, $\|x-y\|< \nu $ for some $\nu<0$). This transformation gives, at the same time, an extension of $Q(y)$ onto a vicinity of $x$ in $U$: $Q(y)=(1,0,\ldots, 0)$ ($n-1$ zeros, $y\in U$, $\|x-y\|< \varsigma $ for some $\varsigma>0$).

Thus, we found an extension of $Q$ onto a vicinity of $x$ in $U$. After that, we can apply Theorem \ref{TheorExtens}. The first statement is proven.

Let us prove the statement 2.  If $Q(x)=0$ then the used above simple version of rectification theorem is not applicable. For any given sufficiently small $(y_{m+1}, \ldots , y_n)$ the set $\{ (y_1, \ldots , y_m, y_{m+1}, \ldots , y_n)\}$ with variable $y_1, \ldots , y_m$ in a vicinity of $x$ is a $m$-dimensional immersed submanifold. In the selected coordinate system, this submanifold is just a shift of a vicinity of $x$ in $M$ by vector $(0, \ldots, 0, y_{m+1}, \ldots , y_n)$. We use for the result of this shift the notation $M(y_{m+1}, \ldots , y_n)$. In this notation, a sufficiently small vicinity of $x$ in $M$ is $M(0, \ldots , 0)$ ($n-m$ zeros). For convenience, put $x$ in the origin: $x=(0,\ldots, 0)$ ($n$ zeros). 

According to the assumptions,  $Q(x)= 0$, but  $D_y(H_M) (Q(y))<-\lambda \|x-y\|$ for $y\in M$, $\|x-y\|< \varepsilon$ and some  $\lambda>0$. Therefore $x$ is a critical point of $H_M$ on $M$. Due to the  strong convexity of $H$ on $U$, we can use the implicit function theorem and find the critical point $x^*(y_{m+1}, \ldots , y_n)$ of $H_{M(y_{m+1}, \ldots , y_n)}$ on $M(y_{m+1}, \ldots , y_n)$, such that $x^*(y_{m+1}, \ldots , y_n)$ is a smooth function and $x^*(0, \ldots , 0)=x$.  

Let us define the extended vector field in the vicinity $x$ in $U$ (extension from dimension $m$ to dimension $n$):
$$Q_n(y)=Q(y-x^*(y_{m+1}, \ldots , y_n)).$$
For this vector field, 
$D_y(H) (Q(y))<-\lambda' \|x-y\|$ for $\|x-y\|< \varepsilon'$ and some  $\lambda'>0$, $\lambda'<\lambda$, $\varepsilon'<\varepsilon$. For the further extension to the entire $U$ we use Theorem \ref{TheorExtens}. The second statement is proven.
\end{proof}

\subsection{Near equilibrium \label{SSecEqui}}

Assume that $x^{\rm eq}\in U$ is the global minimiser of $H$ in $U$. Each dissipative vector fields is zero at $x^{\rm eq}\in U$. Therefore, there exist no standard ``pointwise'' restriction on $\mathcal{P}_{x^{\rm eq}}$: each projector projects zero into zero.  Nevertheless, the dissipativity in the neighbourhood of $x^{\rm eq}\in U$ defines $\mathcal{P}_{x^{\rm eq}}$ unambigously.

To simplify the notation, let us choose the origin at the equilibrium point: $x^{\rm eq}=0$. notice that $H(x)=\frac{1}{2}\langle x | x \rangle_0 + o(\|x\|^2)$. Near the equilibrium,  the linear functional $D_x (H)(z)$ can be approximated as $\langle x | z \rangle_0$: 
$$|D_x (H)(z)-\langle x | z\rangle_0| < o(\|x\|) \|z\|.$$

 Following Theorem \ref{Theor:Linear}, we can guess that $\mathcal{P}_{x^{\rm eq}}=\mathcal{P}_{x^{\rm eq}}^{\perp}$. To prove this guess, we have to consider projections in a vicinity of $x^{\rm eq}$ because exactly at equilibrium, all dissipative vector fields vanish. Near $x^{\rm eq}$ projectors that preserve dissipativity are close to $\mathcal{P}_{x^{\rm eq}}$. The following theorem formalizes our guess.

\begin{theorem}\label{Theo:NearEq}
For preservation of dissipativity by differentiable projector field $\mathcal{P}_x$, it is necessary that
\begin{equation}\label{ProjEq}
\mathcal{P}_x=\mathcal{P}_x^{\perp}+ O(\|x-x^{\rm eq}\|).
\end{equation}
\end{theorem}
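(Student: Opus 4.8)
The plan is to reduce the whole statement to the single equality $\mathcal{P}_{x^{\rm eq}}=\mathcal{P}^{\perp}_{x^{\rm eq}}$, and then to force that equality by importing the counterexample of Theorem~\ref{Theor:Linear} through a linearization at the equilibrium. Placing the origin at $x^{\rm eq}=0$, I first note that $\mathcal{P}_0$ and $\mathcal{P}^{\perp}_0$ are both projectors of $\mathbb{R}^n$ onto the \emph{same} subspace $T_0M$, so they agree if and only if their kernels agree; since $\mathcal{P}_x$ is differentiable by hypothesis and $\mathcal{P}^{\perp}_x$ is a smooth function of the smoothly varying data $T_xM$ and $D^2_x(H)$, the claimed estimate $\mathcal{P}_x=\mathcal{P}^{\perp}_x+O(\|x\|)$ follows from a first-order Taylor expansion as soon as $\mathcal{P}_0=\mathcal{P}^{\perp}_0$ is established. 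The crucial simplification near the equilibrium is the approximation recorded above, $D_x(H)(z)=\langle x\mid z\rangle_0+o(\|x\|)\|z\|$: to leading order the dissipativity test close to $0$ is governed by the frozen Shahshahani form $\langle\cdot\mid\cdot\rangle_0$, which is precisely the situation treated in Theorem~\ref{Theor:Linear}.

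I would then argue by contraposition. Suppose $\mathcal{P}_0\neq\mathcal{P}^{\perp}_0$. Exactly as in the proof of Theorem~\ref{Theor:Linear}, the kernel of $\mathcal{P}_0$ differs from the $\langle\cdot\mid\cdot\rangle_0$-orthogonal complement of $T_0M$, so there is a vector $y$ with $\langle y\mid z\rangle_0=0$ for every $z\in T_0M$ yet $\mathcal{P}_0y\neq0$. I reuse the rank-one operator of that proof, $A_ax=-(\mathcal{P}_0y-ay)\langle\mathcal{P}_0y-ay\mid x\rangle_0$, but now add a strictly dissipative, gradient-like correction and work with $W(x)=A_ax-\varepsilon x$, where $\varepsilon>0$ is fixed small and $a>1$ is a parameter to be sent large. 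The role of the correction is that $A_a$ is only negative \emph{semi}-definite in $\langle\cdot\mid\cdot\rangle_0$, so the $o(\|x\|^2)$ errors produced by the non-quadraticity of $H$ could otherwise destroy genuine dissipativity; the term $-\varepsilon x$ supplies a definite quadratic margin.

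The verification splits into two estimates. For dissipativity of $W$ near $0$ I expand $D_x(H)(W(x))=\langle x\mid A_ax\rangle_0-\varepsilon\langle x\mid x\rangle_0+o(\|x\|^2)$ and combine $\langle x\mid A_ax\rangle_0=-\langle\mathcal{P}_0y-ay\mid x\rangle_0^2\le0$ with strong convexity, $\langle x\mid x\rangle_0\ge\alpha\|x\|^2$, to obtain $D_x(H)(W(x))\le-\varepsilon\alpha\|x\|^2+o(\|x\|^2)<0$ for $0<\|x\|$ small; Theorem~\ref{TheorExtens} then promotes this locally dissipative field to a globally dissipative field on $U$ that coincides with $W$ near $0$. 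For the violation on $M$ I test at points $x\in M$ of the form $x=t\xi+O(t^2)$ with $\xi=\mathcal{P}_0y\in T_0M$, using $\mathcal{P}_x=\mathcal{P}_0+O(\|x\|)$, $\mathcal{P}_xx=x+O(\|x\|^2)$, and the identity $\langle x\mid\mathcal{P}_0A_ax\rangle_0=(a-1)\langle\mathcal{P}_0y\mid x\rangle_0^2$ established inside Theorem~\ref{Theor:Linear}. Collecting terms gives
\[
D_x(H)(\mathcal{P}_xW(x))=t^2\langle\mathcal{P}_0y\mid\mathcal{P}_0y\rangle_0\Big[(a-1)\langle\mathcal{P}_0y\mid\mathcal{P}_0y\rangle_0-\varepsilon\Big]+o(t^2),
\]
so choosing $a$ large makes the bracket positive and the projected field anti-dissipative at these points, contradicting preservation.

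\textbf{Main obstacle.} The delicate part is the error bookkeeping. I must simultaneously control the $o(\|x\|^2)$ and $O(\|x\|^3)$ remainders arising from three independent nonlinearities --- the departure of $H$ from its quadratic model, the $x$-dependence of both the metric $\langle\cdot\mid\cdot\rangle_x$ and the tangent planes $T_xM$, and the $x$-dependence of $\mathcal{P}_x$ --- and show that they are dominated on one side by the $-\varepsilon\alpha\|x\|^2$ margin (which secures global dissipativity of $W$) and on the other by the positive coefficient of $t^2$ (which secures the violation on $M$). These two demands are compatible only because the quantifier order is favorable: $\varepsilon$ is frozen first to guarantee dissipativity, and the free parameter $a$ is then driven large to force the projected dissipation positive, so there is no circularity between the size of the margin and the strength of the counterexample.
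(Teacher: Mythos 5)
Your proof is correct, and its skeleton is the paper's: reduce the statement to the single equality $\mathcal{P}_{x^{\rm eq}}=\mathcal{P}^{\perp}_{x^{\rm eq}}$, argue by contradiction with the rank-one family imported from Theorem~\ref{Theor:Linear} via the separating vector $y$, and globalize the locally defined field with Theorem~\ref{TheorExtens}. Where you genuinely diverge is in how the counterexample field is made dissipative. You freeze the quadratic model at the equilibrium ($A_a$ built from $\langle\cdot|\cdot\rangle_0$ and $\mathcal{P}_0$), which leaves $A_a$ only negative \emph{semi}-definite, so the $o(\|x\|^2)$ remainders from the non-quadraticity of $H$ could flip the sign on the null hyperplane of $\langle \mathcal{P}_0y-ay|\cdot\rangle_0$; you repair this with the margin $-\varepsilon x$ and must then run the quantifier game ($\varepsilon$ fixed first, $a$ driven large), noting also that your remainder constants grow with $a$ (roughly like $a^2$), so the working neighbourhood shrinks as $a$ grows --- harmless, since $a$ is fixed before the neighbourhood, but it is real bookkeeping. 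The paper instead defines $B_a(x)=-(\mathcal{P}_x y-ay)\,D_x(H)(\mathcal{P}_x y-ay)$, i.e.\ it plugs the \emph{exact} differential $D_x(H)$ into the rank-one construction, so that $D_x(H)(B_a(x))=-\bigl(D_x(H)(\mathcal{P}_x y-ay)\bigr)^2\leq 0$ is a perfect square: dissipativity is exact with zero error, no correction term and no large-$a$ limit are needed, and any fixed $a>1$ already yields the violation $D_x(H)(\mathcal{P}_x B_a(x))=(a-1)\langle x|\mathcal{P}_x y\rangle_0^2+o(\|x\|^2)>0$ along $x=\varepsilon\mathcal{P}_x y$. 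So the paper's construction dissolves precisely the error bookkeeping you single out as the main obstacle, while your frozen-operator version has the compensating merits of being a self-contained linearization-plus-margin argument with the remainders controlled explicitly, and of using only $\mathcal{P}_0$, thereby avoiding the paper's slightly awkward evaluation of the projector field $\mathcal{P}_x$ at points off $M$ inside the definition of $B_a$. One shared informality: both you and the paper pass from $\mathcal{P}_0=\mathcal{P}^{\perp}_0$ to the estimate (\ref{ProjEq}) assuming, in effect, Lipschitz dependence of $\mathcal{P}^{\perp}_x$ on $x$ (continuity of $D^2_xH$ and of $T_xM$ alone would give only $o(1)$); since the paper asserts the same equivalence without comment, this is not a gap relative to it.
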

\begin{proof}
The equivalent formulation of this theorem is: $\mathcal{P}_{x^{\rm eq}}=\mathcal{P}_{x^{\rm eq}}^{\perp}$. At equilibrium, the projector is just an orthogonal projector in the Shahshahani metrics. Recall that the Hessian of $H$ is strictly negative definite at any point $x\in U$.  

Let us construct a proof by contradiction. Assume that $M$ is a manifold of reduced description and $0\in M$. Let $\mathcal{P}_{0,M}$ be not an orthogonal projector in the Shahshahani inner product $\mathcal{P}_{0,M}=\tilde{\mathcal{P}}\neq  \mathcal{P}_{0}^{\perp}$. The images of these projectors with  $T_{0}(M)$. Therefore, they should have different kernels: there exists such a vector $y\in  \mathbb{R}^n$ that $\langle y | z\rangle_{0} = 0$ for all $z\in T_{0}(M)$ but $\tilde{\mathcal{P}}y\neq 0$, exactly as in the proof of Theorem \ref{Theor:Linear}. 

Consider a one-parametric family of auxiliary vector fields in the vicinity of the equilibrium:

$$B_a(x)= -(\mathcal{P}_x y-ay)D_x(H) (\mathcal{P}_x y-ay).$$ 

Here we assume that projectors $\mathcal{P}_x$ project dissipative vector fields on $\mathbb{R}^n$ into a dissipative vector field on $T_x(M)$ (in our simplified notation, $T_x(M)=M$. According to the problem statement, this field of projectors should be continuous and $\|\mathcal{P}_x-\mathcal{P}_0\|=O(\|x\|)$.

These vector fields are dissipative: 
$$D_x(H)(B_a(x))=-(D_x(H) (\mathcal{P}_x y-ay))^2\leq 0.$$

Let us demonstrate that projection of $B_a(x)$ on $M$ is not dissipative. For $x\in M$, $\mathcal{P}_x B_a(x)=(a-1)\mathcal{P}_x y D_x(H) (\mathcal{P}_x y-ay)$ and $D_x H (\mathcal{P}_y-ay)= \langle x | \mathcal{P}_xy \rangle_0 +o(\|x\|)$.

Therefore, for $x\in M$, the time derivative of $H$ according to the  projected vector field is

$$D_x(H)(\mathcal{P}_x B_a(x))=(a-1)\langle x| \mathcal{P}_x y \rangle_0^2 + o(\|x\|^2).$$

This expression is positive for $a>1$ and some sufficiently small $x$, for example, for 
$x=\varepsilon \mathcal{P}_x y$ and sufficiently small $\varepsilon>0$. In these cases, the projected vector field is not dissipative despite of dissipativity of the original vector field in $U$.

According to Theorem \ref{TheorExtens}, vector fields dissipative in some vicinity of equilibrium can be extended to the entire $U$. Therefore, we proved that $\mathcal{P}_{x^{\rm eq}}=\mathcal{P}_{x^{\rm eq}}^{\perp}$ and $\mathcal{P}_x=\mathcal{P}_x^{\perp}+ O(\|x-x^{\rm eq}\|)$ for projectors that preserve dissipativity. \end{proof}

This theorem  tells us that the difference between the projectors that preserve dissipativity of all dissipative vector fields and the orthogonal projector (in the Shahshahani metrics) grows out of equilibrium not faster than $O(\|x-x^{\rm eq}\|)$. 

For linear submanifolds $L\subset \mathbb{R}^n$, theorem \ref{Theo:NearEq} will be applied also to dissipative vector fields on  $L\cap U$ of $\mathbb{R}^n$ near minima of $H$ on $L \cap U$ without changes.

\subsection{One-dimensional submanifolds \label{SSeq1D}}

The orthogonal projectors in the Shahshahani metrics solve sometimes the problem of projection with preservation of dissipativity, but already simple example show that this is not the general solution of the problem. Already projections onto one-dimensional manifolds (curves) require different approach. Reduction to essentially one-dimensional cases is not an exotic and oversimplified example. The famous Tamm -- Mott-Smith approximation leads to this problem.

Return to the general problem statement (Sec.~\ref{Formal}). Let the manifold of reduced description $M$ be one-dimensional (a curve). Its tangent space at point $x\in M$ is a one-dimensional subspace $T_x(M) \subset \mathbb{R}^n$. Select a normalised vector $e_x\in T_x(M)$, $\langle e_x| e_x \rangle= 1$. A projector $\mathcal{P}$ can be presented by two subspace: ${\rm ker} \mathcal{P}$ and ${\rm im} \mathcal{P}$ under conditions that 
\begin{equation}\label{ProjectorCond}
{\rm ker} \mathcal{P} \cap {\rm im} \mathcal{P}= \{0\} \mbox{ and }\dim {\rm ker} \mathcal{P}+ \dim {\rm im} \mathcal{P}=n
\end{equation}

The image of $\mathcal{P}_x$ is, obviously, $T_x(M)$. In this subsection, this is the straight line: $T_x(M)=\{\alpha e_x | \alpha \in R\}$, where $e_x$ is a basis vector of $T_x(M)$. The following theorem gives us the kernel of $P_x$.
\begin{theorem}\label{Theor1Dpro}
Assume that the manifold of reduced description $M$ is one-dimensional and  $D_x (H)(e_x)\neq 0$. Then for the preservation of dissipativity in the projection for all dissipative vector fields  it is necessary and sufficient that ${\rm im} \mathcal{P}_x=T_x M$ and ${\rm ker} \mathcal{P}_x = \ker D_x (H)$.
\end{theorem}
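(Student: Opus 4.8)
The plan is to first record the explicit form of the candidate projector and then settle sufficiency and necessity separately. Since $T_x M=\{\alpha e_x\mid\alpha\in\mathbb{R}\}$ is one-dimensional and $D_x(H)(e_x)\neq 0$, the prescription $\operatorname{im}\mathcal{P}_x=T_x M$, $\ker\mathcal{P}_x=\ker D_x(H)$ is internally consistent: $\ker D_x(H)$ is an $(n-1)$-dimensional hyperplane and $e_x\notin\ker D_x(H)$, so (\ref{ProjectorCond}) holds and the operator is well defined with closed form
$$\mathcal{P}_x W=\frac{D_x(H)(W)}{D_x(H)(e_x)}\,e_x,$$
which one checks directly: it fixes $e_x$ and annihilates $\ker D_x(H)$.

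For sufficiency I would simply compute the Lyapunov derivative along the projected field. Because $H_M=H$ on $M$ and the factor $D_x(H)(e_x)$ cancels,
$$D_x(H_M)(\mathcal{P}_x W)=D_x(H)(\mathcal{P}_x W)=D_x(H)(W).$$
Thus this projector reproduces the \emph{exact value} of the Lyapunov derivative, not just its sign, so $D_x(H)(W)\le 0$ immediately gives dissipativity of the projection. This is precisely the ``free lunch'' announced in the introduction, and it disposes of sufficiency at once.

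For necessity I would argue by contradiction, in the spirit of the proofs of Theorems \ref{Theor:Linear} and \ref{Theo:NearEq}. The image is forced to be $T_x M$ by the problem statement, so the only freedom is the kernel; write $\mathcal{P}_x W=\phi(W)\,e_x$ with $\phi$ a linear functional, $\ker\phi=\ker\mathcal{P}_x$ and $\phi(e_x)=1$. Suppose $\ker\mathcal{P}_x\neq\ker D_x(H)$. Then $\phi$ and $D_x(H)$ are linearly independent functionals, so the map $v\mapsto(\phi(v),D_x(H)(v))$ is onto $\mathbb{R}^2$. Hence I can choose a direction $v$ with $D_x(H)(v)<0$ but with $\phi(v)$ of the same sign as $D_x(H)(e_x)$, whence
$$D_x(H_M)(\mathcal{P}_x v)=\phi(v)\,D_x(H)(e_x)>0;$$
that is, $v$ points in a dissipative direction at $x$ while its projection is strictly anti-dissipative.

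The remaining and only delicate step is to promote this single anti-dissipative vector into a genuine counterexample, namely a vector field dissipative on all of $U$ whose projection nonetheless violates dissipativity at $x$. Here I would use the extension machinery already in place: since $D_x(H)(v)<0$ strictly, the constant field $W\equiv v$ is dissipative throughout a small neighbourhood of $x$ by continuity, and Theorem \ref{TheorExtens} extends it to a field $W^{\mathrm{ext}}$ dissipative on all of $U$ with $W^{\mathrm{ext}}(x)=v$. Its projection then satisfies $D_x(H_M)(\mathcal{P}_x W^{\mathrm{ext}}(x))>0$, contradicting preservation of dissipativity. I expect this extension step to be the main point to watch, since the dissipativity hypothesis constrains vector fields globally rather than pointwise; the strict inequality $D_x(H)(v)<0$ is exactly what makes the constant-field-plus-extension argument go through. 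One could alternatively mimic the rank-one auxiliary families $A_a$, $B_a$ of the earlier proofs, but the surjectivity argument above seems the most economical.
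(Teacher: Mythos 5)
Your proposal is correct and follows essentially the same route as the paper: sufficiency via the explicit formula $\mathcal{P}_x Q = e_x\, D_x(H)(Q)/D_x(H)(e_x)$ with the observation that the value (not just the sign) of $dH/dt$ is preserved, and necessity by writing any competing projector as $Q \mapsto e_x\, l(Q)$ and showing that $l$ must be a positive multiple of $D_x(H)$ — your surjectivity argument for $v \mapsto (\phi(v), D_x(H)(v))$ is just a reformulation of the paper's observation that the half-spaces $\{l(y)\le 0\}$ and $\{D_x(H)(y)\le 0\}$ must coincide. Your one genuine addition is making explicit the constant-field-plus-Theorem~\ref{TheorExtens} extension that promotes the pointwise counterexample vector to a globally dissipative field; the paper's proof of Theorem~\ref{Theor1Dpro} tacitly assumes every vector in the dissipative half-space is realized by a dissipative field and only justifies this later (via the inclusions for $Q_D(x)$ preceding Lemma~\ref{LemmaHalfSpaces}), so your version is slightly more self-contained but not a different approach.
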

\begin{proof}
Let ${\rm im} \mathcal{P}_x=T_x (M)$ and ${\rm ker} \mathcal{P}_x = \ker D_x (H)$. Any linear operator $\mathcal{P}$ with given one-dimensional image $M_x$ and kernel $\ker D_x (H)$ can be presented as 
$$\mathcal{P}Q=ae_x D_x (H)(Q)$$
with a real parameter $a$. For the projector $\mathcal{P}_x$ this parameter can be defined using the projector property $\mathcal{P}_x^2= \mathcal{P}_x$: 
$$a^2 e_x (D_x (H)(e_x))D_x (H)(Q)=ae_x D_x (H)(Q).$$
Therefore, $a=(D_x (H)(e_x))^{-1}$ and
\begin{equation}\label{1Dprojector}
\mathcal{P}_xQ=e_x \frac{D_x (H)(Q)}{D_x (H)(e_x)}.
\end{equation}
This is the projector with the given kernel and image.

The time derivative of $H_M$ along the projected vector field $\mathcal{P}_xQ$ (\ref{1Dprojector}) is
$$D_x (H)(\mathcal{P}_xQ)=D_x (H)(e_x)\frac{D_x (H)(Q)}{D_x (H)(e_x)}=D_x (H)(Q).$$
We can see that the time derivative of $H$ does not change after dimensionality reduction with the projector $\mathcal{P}_xQ$ (\ref{1Dprojector}). This is even more then we are looking for: Preservation of the dissipativity assumes preservation of the sign of $dH/dt$, and here we see preservation of the value.

All projectors onto $T_x(M)$ have the same image, $T_x(M)$, but the kernels might be different under conditions (\ref{ProjectorCond}). Let us demonstrate that all kernels of projector other than the null-space of the differential of $H$, $\ker D_x (H)$, lead to violation of dissipativity for some vector fields $Q$ in projection onto $T_x(M)$. 

Consider a projector $\tilde{\mathcal{P}}_x:\mathbb{R}^m\to T_x(M)$ that is different from $\mathcal{P}_x$ given by   formula (\ref{1Dprojector}). The general form of projector on a one-dimensional subspace is $\tilde{\mathcal{P}}_x (Q)=e_x l(Q)$, where $l$ is a linear functional, and $l(e_x) =1$ because projector condition $\mathcal{P}^2=1$. According to projecting of a vector field $Q$ by the projector $\tilde{\mathcal{P}}_x$, the time derivative of $H_M$ is $D_x(H)(e_x) l(Q)$. The preservation of dissipativity requires that this time derivative is non-positive when the original dissipations, $D_x(Q)$ is non-positive. This means that the half-spaces in $\mathbb{R}^n$ given by the inequalities $\{y\in \mathbb{R}^n|l(y)\leq 0\}$ and 
$\{y\in \mathbb{R}^n|D_x(H)(y)\leq 0\}$ coincide. For two linear functionals it is possible if and only if they are proportional with a positive factor. Hence, the preservation of dissipativity requires $l=aD_x(H)$, $a>0$, and the projector $\tilde{\mathcal{P}}$ coincides with $\mathcal{P}_x$ (\ref{1Dprojector}).

Thus, the requirement of preservation of dissipativity unambigously defines projector $\mathcal{P}_x$ (\ref{1Dprojector}). Moreover, the preservation of the sign of dissipation implies the preservation of the value of $dH/dt$ in the reduces system: $dH/dt=dH_M/dt$
\end{proof}

Informally, the proven construction of projector (\ref{1Dprojector}) for one-dimensional manifolds $M$,  can be characterised choosing $H_M$ as the internal coordinate $h$ on $M$ and defining the projector $\mathcal{P}_x$ as follows: for a vector field $W(x)$
$$\frac{d h}{dt} = \frac{d H}{dt}= D_x (U)(W(x)).$$
Here on the left we see movement on $M$ due to the projected vector field (a reduced system), and on the right we see the time derivative of $H$ due to the vector field $W$ in the original system.

Formula for one-dimensional projections (\ref{1Dprojector}) is important for applications, therefore we reproduce it in coordinate form. 

Let a one-dimensional manifold (a curve) be given by $n$ functions of one variable: $x=(f_1(p),\ldots , f_n(p))$.  Here $p$ is a real variable (parameter), and $f_i$ are the twice differentiable functions. 
\begin{itemize}
\item For a given $x=(f_1(p),\ldots , f_n(p))$, $e_x$ is a vector of derivatives, $e_x=(df_1(p)/dp, \ldots df_n(p)/dp)$, and some of these derivatives are non-zero. 
\item The linear functional $D_x H$ is a linear function with coefficients $\partial H/\partial x_i$.
\item The vector field $Q$ is $Q=(q_1, \ldots ,q_n)$.
\item The value of the functional, $D_x (H)(Q)$ is 
$$D_x (H)(Q)=\sum_{i=1}^n \frac{\partial H}{\partial x_i} q_i.$$
\item The value of the functional $D_x (H)(e_x)$ is
$$D_x (H)(e_x)=\sum_{i=1}^n \frac{\partial H}{\partial x_i} \frac{df_i}{dp}.$$
\item The final formula (\ref{1Dprojector}) for the projector $\mathcal{P}_x$ looks in coordinates as follows:
\begin{equation*}
(\mathcal{P}_xQ)_i=\frac{d f_i}{dp}\frac{\sum_{j=1}^n \frac{\partial H}{\partial x_j} q_j}{\sum_{j=1}^n \frac{\partial H}{\partial x_j} \frac{df_j}{dp}}.
\end{equation*}
\end{itemize}
In particular, formula (\ref{1Dprojector}) and Theorem \ref{Theor1Dpro} explain the mathematical foundations of the results of Lampis \cite{Lampis1977} and Gorban and Karlin \cite{GorbanKarlin1992} about the projector of the Boltzmann equation on the the Tamm -- Mott-Smith approximation in the theory of strong shock waves.

\subsection{General theorem about projector with preservation of dissipativity}

The answer to the main question is a combination of constructions presented by Theorems \ref{Theo:NearEq} and \ref{Theor1Dpro}. 

Let $M\subset U$ be an anzatz submanifold,  $x\in M$, $x\neq x^{\rm eq}$. Assume the {\it transversality condition}: differential of $H$ at $x$ does not annuls the tangent space $T_x(M)$. This means that $x$ is not a critical point of $H_M$. We are looking for projectors $\mathcal{P}_x:\mathbb{R}^n\to T_x(M)$ that project any dissipative vector field $Q$ ($D_x(H)(Q)\leq 0$) into a dissipative vector field on $M$ ($D_x(H_M) (\mathcal{P}_x(Q))\leq 0$). We will find the necessary conditions, prove that these conditions define unique projector, and  demonstrate that this projector transforms any dissipative vector fields on $U$ into a dissipative vector fields on $M$.

Consider the set of vectors $Q_D(x)=\{Q(x)\}$, where $Q$ is a dissipative vector field on $U$. It includes the open half-space $\{Q | D_x (H) (Q)<0 \}$ and belongs to the closed half-space $\{Q | D_x (H) (Q)\leq 0 \}$:
$$\{Q | D_x (H) (Q)<0 \} \subset  Q_D(x) \subset \{Q | D_x (H) (Q)\leq 0 \}$$
The second inclusion coincides with the dissipativity condition: the derivative of $H$ along $H$ is non-positive. For the first inclusion we have to use Proposition \ref{ProposExtend}. Use any local coordinate system in $U$ near $x$ and define a constant vector field $Q(y)=Q(x)$ in a  small vicinity of $x$ if this vicinity is sufficiently small then there $D_y (H) (Q(y))<0$. Use the extension Theorem \ref{TheorExtens} to extend this locally given dissipative vector field to the entire $U$. The first inclusion is proven.

A similar statement is valid for vector fields on $M$. Consider the set of vectors $Q_{D, M}(x)= {Q(x)}$, where $Q$ is a dissipative vector field on $M$. It includes the open half-space $\{Q | D_x (H_M) (Q)<0 \}$ and belongs to the closed half-space $\{Q | D_x (H_M) (Q)\leq 0 \}$,
$$\{Q | D_x (H_M) (Q)<0 \} \subset  Q_D(x) \subset \{Q | D_x (H_M) (Q)\leq 0 \},$$
where all derivatives are calculated on $M$.

\begin{lemma}\label{LemmaHalfSpaces}
Projector $\mathcal{P}_x$ projects the closed half-space $\{Q | D_x (H) (Q)\leq 0 \}$ in $\mathbb{R}^n$  onto the closed half-space $\{Q | D_x (H_M) (Q)\leq 0 \}$ in $T_x(M)$:
\begin{equation}\label{HalfSpaceProjection}
\mathcal{P}_x(\{Q | D_x (H) (Q)\leq 0 \}) = \{Q | D_x (H_M) (Q)\leq 0 \}
\end{equation}
\end{lemma}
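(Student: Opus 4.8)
The plan is to prove the set equality \eqref{HalfSpaceProjection} by establishing the two inclusions separately, using two structural facts about the objects involved. First, since $\mathcal{P}_x$ is idempotent with image $T_x(M)$, it restricts to the identity on $T_x(M)$; in particular $\mathcal{P}_x v = v$ for every $v \in T_x(M)$, and the whole image $\mathcal{P}_x(\{Q \mid D_x(H)(Q)\le 0\})$ automatically lies inside $T_x(M)$, so it is meaningful to compare it with the target half-space sitting in $T_x(M)$. Second, $D_x(H_M)$ is by construction the restriction of the functional $D_x(H)$ to $T_x(M)$, so $D_x(H)(v) = D_x(H_M)(v)$ for every $v \in T_x(M)$. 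I would also keep at hand the preservation hypothesis in its ``value'' form, $D_x(H_M)(\mathcal{P}_x W(x)) \le 0$ for each dissipative field $W$ on $U$, together with the already-proven inclusion $\{Q \mid D_x(H)(Q)<0\} \subset Q_D(x)$ coming from Proposition \ref{ProposExtend} and the extension Theorem \ref{TheorExtens}.

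I would dispatch the inclusion $\{Q \mid D_x(H_M)(Q)\le 0\} \subseteq \mathcal{P}_x(\{Q \mid D_x(H)(Q)\le 0\})$ first, as it is immediate. If $v \in T_x(M)$ satisfies $D_x(H_M)(v)\le 0$, then $v = \mathcal{P}_x v$ because $\mathcal{P}_x$ fixes $T_x(M)$ pointwise, and $D_x(H)(v) = D_x(H_M)(v)\le 0$ shows that $v$ already lies in the source half-space. Hence $v = \mathcal{P}_x(v)$ exhibits $v$ as an image of a source vector, which is exactly the claimed membership.

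For the forward inclusion $\mathcal{P}_x(\{Q \mid D_x(H)(Q)\le 0\}) \subseteq \{Q \mid D_x(H_M)(Q)\le 0\}$ I would argue in two stages. When $Q$ satisfies the strict inequality $D_x(H)(Q)<0$, it lies in the open half-space, which is contained in $Q_D(x)$; thus $Q = W(x)$ for some globally dissipative field $W$, and the preservation hypothesis gives $D_x(H_M)(\mathcal{P}_x Q)\le 0$ directly. The boundary case $D_x(H)(Q)=0$ is where the only genuine work lies, since such a $Q$ need not be the value of any dissipative field. Here I would approximate from the interior: fix a vector $v$ with $D_x(H)(v)>0$ (available because $D_x(H)\ne 0$ away from the equilibrium, indeed already on $T_x(M)$ by the transversality assumption) and set $Q_{\varepsilon}=Q-\varepsilon v$, so that $D_x(H)(Q_{\varepsilon}) = -\varepsilon\, D_x(H)(v) < 0$ for $\varepsilon>0$. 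The strict case then yields $D_x(H_M)(\mathcal{P}_x Q_{\varepsilon})\le 0$, and letting $\varepsilon \to 0$, using linearity of $\mathcal{P}_x$ and continuity of the functional $D_x(H_M)$ (equivalently, closedness of the target half-space), gives $D_x(H_M)(\mathcal{P}_x Q)\le 0$.

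I expect this closure step to be the principal, if mild, obstacle: everything else is forced, the reverse inclusion by idempotence plus the restriction identity $D_x(H)=D_x(H_M)$ on $T_x(M)$, and the strict forward inclusion directly from the preservation hypothesis combined with the realization of the open half-space by values of dissipative fields. The content of the closure argument is simply that a strict inequality valid on the dense open half-space passes to its closure by continuity of $Q \mapsto D_x(H_M)(\mathcal{P}_x Q)$. I would close by noting that both half-spaces are proper and nonempty precisely because the transversality condition guarantees $D_x(H_M)\ne 0$, so the statement is not vacuous.
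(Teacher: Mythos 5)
Your proof is correct, and it diverges from the paper's in one direction of the inclusion. For the forward inclusion $\mathcal{P}_x(\{Q \mid D_x(H)(Q)\le 0\}) \subseteq \{Q \mid D_x(H_M)(Q)\le 0\}$ you follow the same route as the paper: realize the open half-space as values of globally dissipative fields (the inclusion $\{Q \mid D_x(H)(Q)<0\}\subset Q_D(x)$, established via the constant-field construction, Proposition \ref{ProposExtend} and Theorem \ref{TheorExtens}), apply the preservation hypothesis, and pass to the boundary by continuity; your explicit perturbation $Q_\varepsilon = Q - \varepsilon v$ with $D_x(H)(v)>0$ and the limit $\varepsilon\to 0$ spell out what the paper compresses into the single phrase ``continuity of projectors (finite-dimensional linear operators).'' Where you genuinely depart is the reverse inclusion: the paper extends each $V\in T_x(M)$ with $D_x(H_M)(V)<0$ to a dissipative field on all of $U$ via Proposition \ref{ProposExtend}, notes $\mathcal{P}_x V = V$, and again closes up by continuity, whereas you observe that no dynamics is needed at all --- any $v\in T_x(M)$ with $D_x(H_M)(v)\le 0$ satisfies $D_x(H)(v)=D_x(H_M)(v)\le 0$ because $D_x(H_M)$ is by definition the restriction of $D_x(H)$ to $T_x(M)$, so $v=\mathcal{P}_x v$ already exhibits $v$ as the image of a point of the source half-space, with the closed case handled directly and no limiting step. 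Your argument is more elementary and more robust (it works for any idempotent linear map with image $T_x(M)$, independently of the extension machinery); the paper's route keeps the entire lemma phrased uniformly in terms of values of dissipative fields, which matches the surrounding framework built on $Q_D(x)$ and $Q_{D,M}(x)$. It is worth noting that the paper itself uses exactly your algebraic shortcut in the proof of Lemma \ref{LemmaBorderProjection}, where the inclusion $\{V \mid D_x(H_M)(V)=0\}\subseteq \mathcal{P}_x(\{Q \mid D_x(H)(Q)=0\})$ is justified by the containment $\{V \mid D_x(H_M)(V)=0\}\subset\{Q \mid D_x(H)(Q)=0\}$ --- so your observation in effect streamlines Lemma \ref{LemmaHalfSpaces} to the same standard.
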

\begin{proof}
According to Proposition \ref{ProposExtend}, each vector field on $M$ that satisfies the inequality  $D_x (H_M) (Q)<0$ can be extended from a vicinity of $x$ on $M$ into a dissipative vector field in entire $U$. The projection of this set of extensions is again $Q_{D, M}(x)$. Therefore projection of the half-space $\{Q | D_x (H) (Q)\leq 0 \}$ by $\mathcal{P}_x$ should coincide with the half-space $\{Q | D_x (H_M) (Q)\leq 0 \}$. Indeed, the inclusion
$$\mathcal{P}_x (\{Q | D_x (H) (Q)\leq 0 \}) \subseteq \{Q | D_x (H_M) (Q)\leq 0 \}$$ 
holds because for each dissipative vector field  $W(x)$ its projection on $T_x(M)$ should be also dissipative.
The inclusion
$$ \{Q | D_x (H_M) (Q)\}< 0 \} \subseteq  \mathcal{P}_x (\{Q | D_x (H) (Q)< 0 \})$$
holds because each vector $V$ of the open half-space $\{Q | D_x (H_M) (Q)\}$ can be extended to the dissipative vector field on $U$. The value of this field at point $x$ is  again $V\in T_x(M)$ and it projection $\mathcal{P}_x V =V$. The inclusions for the closed half-spaces follow from the proven inclusions for the open half-spaces and continuity of projectors (finite-dimensional linear operators).
\end{proof}

This lemma also gives the following equality
\begin{lemma}\label{LemmaBorderProjection}
\begin{equation}\label{BorderProjection}
\mathcal{P}_x(\{Q | D_x (H) (Q)= 0 \}) = \{V | D_x (H_M) (V)= 0 \}
\end{equation}
\end{lemma}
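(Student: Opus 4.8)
The plan is to identify both sides of \eqref{BorderProjection} as the boundary hyperplanes of the two closed half-spaces that already appear in Lemma \ref{LemmaHalfSpaces}, and then to transport the equality $\mathcal{P}_x(\{Q \mid D_x(H)(Q)\leq 0\}) = \{V \mid D_x(H_M)(V)\leq 0\}$ down to these boundaries. The crucial observation is that a hyperplane through the origin is exactly the set of vectors lying in the closed half-space together with their negatives. Writing $C=\{Q\mid D_x(H)(Q)\leq 0\}$ and $C_M=\{V\mid D_x(H_M)(V)\leq 0\}$, one has $\{Q\mid D_x(H)(Q)=0\}=C\cap(-C)$ and $\{V\mid D_x(H_M)(V)=0\}=C_M\cap(-C_M)$, since $D_x(H)(Q)=0$ is equivalent to $D_x(H)(Q)\leq 0$ and $D_x(H)(-Q)\leq 0$ holding simultaneously.

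For the inclusion ``$\subseteq$'' I would use linearity of $\mathcal{P}_x$ together with Lemma \ref{LemmaHalfSpaces}. Because $\mathcal{P}_x$ is linear, $\mathcal{P}_x(-C)=-\mathcal{P}_x(C)=-C_M$. Since the image of an intersection is always contained in the intersection of the images,
$$\mathcal{P}_x(C\cap(-C))\subseteq \mathcal{P}_x(C)\cap\mathcal{P}_x(-C)=C_M\cap(-C_M),$$
which is precisely the statement that $\mathcal{P}_x(\{Q\mid D_x(H)(Q)=0\})\subseteq\{V\mid D_x(H_M)(V)=0\}$.

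The reverse inclusion ``$\supseteq$'' is where something genuinely new is needed, since $\mathcal{P}_x(A\cap B)$ need not equal $\mathcal{P}_x(A)\cap\mathcal{P}_x(B)$ and the containment above cannot be reversed formally. Here I would exploit two facts: that $\mathcal{P}_x$ fixes its image $T_x(M)$ pointwise (it is a projector onto $T_x(M)$), and that on $T_x(M)$ the differential of the restriction agrees with that of $H$, i.e.\ $D_x(H_M)(V)=D_x(H)(V)$ for $V\in T_x(M)$, as recorded in Section \ref{Formal}. Thus if $V$ satisfies $D_x(H_M)(V)=0$, then $V\in T_x(M)$ obeys $D_x(H)(V)=0$, so $V$ itself lies in $\{Q\mid D_x(H)(Q)=0\}$, and $\mathcal{P}_x V=V$ exhibits $V$ as an image point. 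This gives $\{V\mid D_x(H_M)(V)=0\}\subseteq\mathcal{P}_x(\{Q\mid D_x(H)(Q)=0\})$, and combining the two inclusions yields the claimed equality. The main obstacle is precisely this asymmetry: the ``$\subseteq$'' direction follows cheaply from Lemma \ref{LemmaHalfSpaces} and linearity, whereas ``$\supseteq$'' must instead be extracted directly from the projector property $\mathcal{P}_x|_{T_x(M)}=\mathrm{id}$ and the agreement of $D_x(H_M)$ with $D_x(H)$ on the tangent space.
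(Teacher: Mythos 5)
Your proof is correct and takes essentially the same approach as the paper: your ``$\subseteq$'' direction, phrased set-theoretically as $\mathcal{P}_x(C\cap(-C))\subseteq \mathcal{P}_x(C)\cap\mathcal{P}_x(-C)$ via linearity, is the same $\pm Q$ symmetry the paper runs as a pointwise contradiction, and your ``$\supseteq$'' direction is exactly the paper's, which rests on $\ker D_x(H_M)\subseteq \ker D_x(H)$ (you merely make explicit the step $\mathcal{P}_x V=V$ for $V\in T_x(M)$ that the paper leaves implicit).
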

\begin{proof}
Indeed, if $D_x (H) (Q)= 0$ then $D_x (H) (-Q)= 0$ and projections of both vectors $Q$ and $-Q$ belong to the closed half-space $\{Q | D_x (H) (Q)\}\leq 0 \}$. Assume that $D_x (H_M)(\mathcal{P}_x (Q)) <0$. then $D_x (H_M)(\mathcal{P}_x (-Q))>0$ and $\mathcal{P}_x(-Q) \notin \{Q | D_x (H_M) (Q)\}< 0 \}$. This contradiction with (\ref{HalfSpaceProjection}) proves the inclusion.  
$$\mathcal{P}_x(\{Q | D_x (H) (Q)= 0 \}) \subseteq \{V | D_x (H_M) (V)= 0 \}.$$
Another inclusion
$$\{V | D_x (H_M) (V)= 0 \} \subseteq \mathcal{P}_x(\{Q | D_x (H) (Q)= 0 \})$$
is satisfied because $\{V | D_x (H_M) (V)= 0 \}\subset \{Q | D_x (H) (Q)= 0 \}$.  
\end{proof}

On the subspace $\ker D_x (H)=\{Q | D_x (H) (Q)= 0 \}$ the projector $\mathcal{P}_x$ acts as an orthogonal projector in the Shahshahani inner product.
\begin{lemma}\label{Restriction}
Restriction $\mathcal{P}_x$ onto $\ker D_x (H)$ is an orthogonal projector in the Shahshahani inner product $\langle \cdot | \cdot \rangle_x$:
$$\mathcal{P}^{\perp}:\ker D_x (H) \to \ker D_x (H_M)$$
\end{lemma}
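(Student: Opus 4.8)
The plan is to prove the claim in two stages: first that $\mathcal{P}_x$ restricted to $V:=\ker D_x(H)$ is a linear projector of $V$ onto $W:=\ker D_x(H_M)$, and then that this projector is orthogonal for the Shahshahani product $\langle\cdot\,|\,\cdot\rangle_x$. The first stage is essentially algebraic. Under the transversality assumption $W=T_x(M)\cap\ker D_x(H)$ has codimension one in $T_x(M)$, while $V$ is a hyperplane in $\mathbb{R}^n$. Lemma~\ref{LemmaBorderProjection} gives $\mathcal{P}_x(V)=W$, so the restriction takes values in $W$; and since $W\subset T_x(M)=\mathrm{im}\,\mathcal{P}_x$ and a projector is the identity on its image, $\mathcal{P}_x$ fixes $W$ pointwise. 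Hence $\mathcal{P}_x|_V$ is idempotent with image $W$, i.e. a projector of $V$ onto $W$, and it remains only to identify its kernel with the Shahshahani-orthogonal complement of $W$ in $V$.

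For the second stage I would argue by contradiction, transporting the counterexample mechanism of Theorems~\ref{Theor:Linear} and~\ref{Theo:NearEq} to the point $x$. The guiding observation is that $D_x(H)$ vanishes identically on $V$, so the problem restricted to $V$ with the subspace $W$ is structurally the same as the equilibrium problem of Theorem~\ref{Theo:NearEq}, where the differential of $H$ also vanishes to first order and dissipativity is governed by the metric. Thus, if $\mathcal{P}_x|_V$ were not orthogonal, there would exist $y\in V$ with $\langle y\,|\,z\rangle_x=0$ for all $z\in W$ but $\mathcal{P}_x y\neq 0$ (necessarily $\mathcal{P}_x y\in W$). I would then reuse the one-parameter family $B_a(x')=-(\mathcal{P}_{x'}y-ay)\,D_{x'}(H)(\mathcal{P}_{x'}y-ay)$, which is automatically dissipative because $D_{x'}(H)(B_a(x'))=-\bigl(D_{x'}(H)(\mathcal{P}_{x'}y-ay)\bigr)^2\le 0$, extend it to all of $U$ by Theorem~\ref{TheorExtens}, and track the reduced dissipation $\varphi(x')=D_{x'}(H)\bigl(\mathcal{P}_{x'}B_a(x')\bigr)=(a-1)\,D_{x'}(H)(\mathcal{P}_{x'}y)\,D_{x'}(H)(\mathcal{P}_{x'}y-ay)$ at points $x'\in M$ near $x$. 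Since both $y$ and $\mathcal{P}_x y$ lie in $\ker D_x(H)$, all factors vanish at $x$, so $\varphi(x)=0$ and the sign violation is of second order, to be exposed along a curve in $M$ issuing from $x$ with velocity $\mathcal{P}_x y$, in exact analogy with the choice $x=\varepsilon\mathcal{P}_xy$ used near equilibrium.

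The step I expect to be the main obstacle is the non-equilibrium correction: since $D_x(H)$ does not vanish off $V$, the contribution $\beta(u):=D_x(H)\bigl(\partial_u(\mathcal{P}_{x'}y)\bigr)$ from the varying projector field, together with the curvature of $M$, enters $d\varphi|_x$ at the same order as the orthogonality defect, whereas in Theorem~\ref{Theo:NearEq} the smallness of $D_x(H)$ itself demoted exactly this term to higher order. My plan to discharge it is to exploit that $\varphi$ attains an interior maximum value $0$ at $x$, so $d\varphi|_x=0$ on $T_x(M)$ for every $B_a$; running $a$ through both $(1,\infty)$ and $(-\infty,1)$ yields an equality and a sign condition on the linear forms $u\mapsto\langle u\,|\,\mathcal{P}_xy\rangle_x+\beta(u)$ and $u\mapsto\langle u\,|\,y\rangle_x$, after which I would eliminate $\beta$ by probing with the whole family of admissible $y$ and using $\mathcal{P}_{x'}y\in T_{x'}(M)$. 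Once the defect is isolated from $\beta$, the inequality $\varphi\le 0$ fails for $a>1$ along that curve, Theorem~\ref{TheorExtens} globalizes the offending field, and we conclude $\mathcal{P}_x|_{\ker D_x(H)}=\mathcal{P}^{\perp}$, the asserted orthogonal projector onto $\ker D_x(H_M)$.
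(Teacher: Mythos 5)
Your stage one is fine and matches what the paper leaves implicit: Lemma~\ref{LemmaBorderProjection} gives $\mathcal{P}_x(V)=W$ for $V=\ker D_x(H)$, $W=\ker D_x(H_M)$, and idempotency on $W\subset\mathrm{im}\,\mathcal{P}_x$ makes $\mathcal{P}_x|_V$ a projector of $V$ onto $W$. The genuine gap is in stage two, and it sits exactly where you feared. First, your ``interior maximum'' step is vacuous: writing $\varphi_a=(a-1)\,g\,h$ with $g(x')=D_{x'}(H)(\mathcal{P}_{x'}y)$ and $h=g-a\,D_{x'}(H)(y)$, both factors vanish at $x$ (since $y,\mathcal{P}_x y\in\ker D_x(H)$), so $d\varphi_a|_x=(a-1)(g\,dh+h\,dg)|_x=0$ holds identically for every $a$ and every admissible $y$ and yields no constraints at all. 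The operative condition is second order, and along a curve in $M$ with velocity $u=\mathcal{P}_x y$ one computes, using $\langle \mathcal{P}_x y\,|\,y\rangle_x=0$,
$$\varphi_a(x(t))=(a-1)\bigl(\|\mathcal{P}_x y\|_x^2+\beta(u)\bigr)^2\,t^2+o(t^2),\qquad \beta(u)=D_x(H)\bigl(\partial_u(\mathcal{P}_{x'}y)\bigr),$$
which for $a>1$ is non-positive precisely when $\beta(u)=-\|\mathcal{P}_x y\|_x^2$. So a hypothetical non-orthogonal projector field whose first derivative satisfies this one scalar identity passes every test the family $B_a$ can pose: the family alone cannot produce your contradiction, because at a non-equilibrium point $D_x(H)$ is $O(1)$ off $V$ and nothing pins down $\partial_u\mathcal{P}$ at the single point $x$. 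Your proposed cure, ``eliminate $\beta$ by probing with the whole family of admissible $y$,'' is exactly the missing argument --- each $y$ introduces its own unknown $\beta$, and you never show the resulting system is inconsistent. In Theorem~\ref{Theo:NearEq} this term was demoted to higher order only because $\|D_{x'}(H)\|=O(\|x'-x^{\rm eq}\|)$ near equilibrium; that smallness is precisely what fails at $x$, so the counterexample mechanism does not transport.

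What the paper does instead is to make your correct intuition --- that on $V$ the problem is ``structurally the same as the equilibrium problem'' --- literal before expanding, rather than after. By strong convexity, $x$ is the global minimizer of $H$ on the section $U\cap(x+\ker D_x(H))$, and for vectors in $V$ the full and restricted dissipation coincide, so locally dissipative fields of the section extend to dissipative fields on all of $U$ via Theorem~\ref{TheorExtens} and Proposition~\ref{ProposExtend}. Then the near-equilibrium Theorem~\ref{Theo:NearEq} --- extended to linear sections $L\cap U$ near minima of $H$ by the remark immediately following it --- applies with $x$ playing the role of the equilibrium, and the smallness of the restricted differential along the hyperplane does exactly the job of suppressing your $\beta$-term. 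In short: the fix is not a cleverer second-order expansion at the non-equilibrium point, but a restriction of the ambient problem to $x+\ker D_x(H)$ that turns $x$ into an equilibrium, after which the already-proven theorem closes the argument.
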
 
\begin{proof}
Consider restriction of $H$ onto intersection of hyperplane $x+\ker D_x (H)$ with $U$. On
$U \cap  (x+\ker D_x (H))$ point $x$ is a global minimizer of $H$. Similarly, point $x$ is a global minimizer of $H$ on $U \cap  (x+\ker D_x (H_M))$. Notice that $(U \cap  (x+\ker D_x (H_M)))\subset  (U \cap  (x+\ker D_x (H)))$. For projections of dissipative vector fields in
$U \cap  (x+\ker D_x (H))$ into dissipative vector fields in $U \cap  (x+\ker D_x (H_M))$ near point $x$ we can use Theorem \ref{Theo:NearEq}. According to this theorem, the restriction of $\mathcal{P}_x$ on $\ker D_x (H)$ is an orthogonal projector onto $D_x (H_M)$ in the Shahshahani metrics.
\end{proof}

Finally, we approach a linear algebra problem: Let $L$ be an Euclidean space with inner product $\langle \cdot | \cdot \rangle$. Assume that $W$ is a subspace in $L$, $l\neq 0$ is a linear functional in $L$, $W\not\subset \ker l$, $\mathcal{P}:L\to W$ is a projector, ${\rm im} \mathcal{P}=W$, $\mathcal{P}(\{y\in L | l(y)\leq 0\})=\{z\in W | l(z)\leq 0\}$, and the restriction of $\mathcal{P}$ on $\ker l$, $\mathcal{P}|_{\ker l}:\ker l \to (\ker l \cap W)$ is an orthogonal projector in the inner product $\langle \cdot | \cdot \rangle$. Demonstrate that these conditions define $\mathcal{P}$ unambiguously and find an explicit construction of this projector.

Let $L_0 = \ker l$ and $W_0=W \cap \ker l$. Find a unit normal vector $\nu$ to $W_0$. Space $L$ can be presented as an orthogonal sum of three subspaces:
$$L=W_0 \oplus \{y\in L_0| y\perp W_0\} \oplus \{\alpha \nu | \alpha \in R\}.$$
Subspace $W$ can be presented as an orthogonal sum of two subspaces:
$$W=W_0 \oplus \{\alpha \nu_W | \alpha \in R\},$$
where $\nu_W\in W$ is the unit normal vector to $W_0$ in $W$.
Note, that in a general situation $\nu_W \neq \nu$. For convenience, we choose such orientation of normal vectors that $l(\nu)<0$ and $l(\nu_W) <0$.

If a vector $z\in L$ is a orthogonal sum 
\begin{equation}\label{ortsum}
z= z_0\oplus z_0^{\perp}\oplus \zeta \nu  
\end{equation}
($z_0\in W_0$, $z_0^{\perp}\in \{y\in L_0| y\perp W_0\}$ and $\zeta \in R$), then its projection onto $W$ should have a form:
$$\mathcal{P}z= z_0 \oplus \beta \zeta \nu_W,$$
where $\beta$ is a real number. This coefficient is defined by the condition $\mathcal{P}(\nu_W)=\nu_W$ because projector $\mathcal{P}$ is the identity operator on its image.

Vector $\nu_W$ can be represented in $L$ as an orthogonal sum: $\nu_W= \nu_W^{\parallel}\oplus 
\nu_W^{\perp}$, where $\nu_W^{\perp}=\langle \nu_W|\nu\rangle \nu$ and  $\nu_W^{\parallel}=\nu_W-\nu_W^{\perp}$. $\nu_W^{\parallel}\in W_0$ because $\langle \nu_W^{\parallel}|\nu \rangle=0$. Therefore, $\mathcal{P}\nu_W=\beta \langle \nu_W|\nu\rangle \nu_W$.

The condition $\mathcal{P}\nu_W=\nu_W$ gives:  $\beta \langle \nu_W|\nu\rangle=1$ and
$$\beta=\frac{1}{\langle \nu_W|\nu\rangle}.$$

Finally, the projector $\mathcal{P}$ is unambiguously defined  and for the projection of the orthogonal sum (\ref{ortsum}) we obtain:
\begin{equation}\label{Projector1}
\boxed {\mathcal{P}(z_0\oplus z_0^{\perp}\oplus \zeta \nu)=z_0 \oplus \frac{\zeta \nu_W}{\langle \nu_W|\nu\rangle}} 
\end{equation}
Note that this projector preserves the value of $l(z)$. First of all, 
$$l(\nu_W)=l(\nu_W^{\perp})=\langle \nu_W|\nu\rangle l(\nu)$$
because $\nu_W^{\parallel}\in \ker l$. Note that $l(z_0\oplus z_0^{\perp}\oplus \zeta \nu)=\zeta l(\nu)$. Therefore
$$l(\mathcal{P}(z_0\oplus z_0^{\perp}\oplus \zeta \nu))=l\left(z_0 \oplus \frac{\zeta \nu_W}{\langle \nu_W|\nu\rangle}\right)=\zeta l(\nu)=l(z_0\oplus z_0^{\perp}\oplus \zeta \nu).$$

Thus, we required the preservation of the sign of $l(z)$ and obtained the preservation of its value. 
 
Lemmas \ref{LemmaHalfSpaces}, \ref{LemmaBorderProjection} and \ref{Restriction} together with formula (\ref{Projector1}) prove the following theorem. Let $x\in U$ be a non-critical point of $H$. Assume that $M\subset U$ is a smooth  immersed ansatz manifold, $x\in M$, and the transversality condition holds: $T_x(M)\not\subset \ker D_x H$ (that is, $M$ is not tangent to the level set of $H$ at point $x$). Let us use the notations $L_0=\ker D_x H$ and $W_0=T_x(M)\cap L_0$. We use the Shahshahani inner product $\langle \cdot | \cdot \rangle_x$. Find unit normal vectors $\nu$ for $L_0$ and $\nu_W \in T_x(M)$ for $W_0$. For most applications, it is convenient, to select the ``antigradient'' orientations of these normal vectors: $D_x(H)(\nu)<0$ and $D_x(H_M)(\nu_W)<0$.
Represent $L_0$ in the form of the orthogonal sum: $L_0=W_0\oplus \{y\in L_0| y\perp W_0\}$. Each vector $Q\in \mathbb{R}^n$ can be represented as an orthogonal sum: $Q=Q_0\oplus Q_0^{\perp}\oplus\zeta \nu$.
\begin{theorem}\label{TheorFinalProjector}
Projector field $\mathcal{P}_x$ that solves the problem of dimensionality reduction with preservation of dissipativity acts as follows: 
\begin{equation}\label{ProjectorFin}
\boxed{\mathcal{P}_x(Q)=Q_0\oplus\frac{\zeta \nu_W}{\langle \nu_W|\nu\rangle}} 
\end{equation}
\end{theorem}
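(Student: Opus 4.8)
The plan is to recognise that this theorem is nothing but the geometric translation of the abstract linear-algebra problem solved in the paragraphs preceding it, so the entire task reduces to verifying that any dissipativity-preserving projector $\mathcal{P}_x$ satisfies exactly the hypotheses of that problem. I would set $L=\mathbb{R}^n$ equipped with the Shahshahani inner product $\langle\cdot|\cdot\rangle_x$, take the functional $l=D_x(H)$ and the subspace $W=T_x(M)$. Non-criticality of $x$ gives $l\neq 0$, while the transversality condition $T_x(M)\not\subset\ker D_x(H)$ gives $W\not\subset\ker l$; hence both the unit normal $\nu$ to $L_0=\ker l$ and the unit normal $\nu_W\in W$ to $W_0=W\cap L_0$ are well defined, and $\langle\nu_W|\nu\rangle\neq 0$ because $\nu_W\notin\ker l$.

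The three structural hypotheses of the abstract problem then all follow from results already established. That ${\rm im}\,\mathcal{P}_x=W$ is built into the definition of a projector onto $T_x(M)$. The half-space identity $\mathcal{P}_x(\{Q\mid D_x(H)(Q)\le 0\})=\{z\in W\mid D_x(H_M)(z)\le 0\}$ is precisely Lemma~\ref{LemmaHalfSpaces}, and the assertion that the restriction $\mathcal{P}_x|_{\ker l}$ is the Shahshahani-orthogonal projector onto $W_0$ is Lemma~\ref{Restriction}. With these three properties in hand, the derivation culminating in formula~(\ref{Projector1}) applies verbatim: $\mathcal{P}_x$ is the identity on $W_0$, carries the $\nu$-direction into the line $\mathbb{R}\nu_W$, and the single remaining scalar $\beta$ is pinned down by idempotency. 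I would execute this last step by writing $\nu_W=\nu_W^{\parallel}\oplus\langle\nu_W|\nu\rangle\nu$ and observing that $\nu_W^{\parallel}\perp W_0$, which collapses the image of $\nu_W$ to $\mathcal{P}_x\nu_W=\beta\langle\nu_W|\nu\rangle\nu_W$; the demand $\mathcal{P}_x\nu_W=\nu_W$ then forces $\beta=1/\langle\nu_W|\nu\rangle$ and gives exactly~(\ref{ProjectorFin}).

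It then remains to confirm sufficiency, namely that~(\ref{ProjectorFin}) really does preserve dissipativity. For this I would check directly that the projector leaves the functional $D_x(H)$ invariant: since $\nu_W^{\parallel}\in\ker l$ one has $l(\nu_W)=\langle\nu_W|\nu\rangle\,l(\nu)$, and because $l$ annihilates $Q_0$ and $Q_0^{\perp}$ a one-line computation yields $D_x(H)(\mathcal{P}_x Q)=\zeta\,l(\nu)=D_x(H)(Q)=D_x(H_M)(\mathcal{P}_x Q)$. Thus not only the sign but the very value of $dH/dt$ is reproduced---the ``free lunch'' promised in the introduction---so every dissipative field on $U$ is carried to a dissipative field on $M$, and uniqueness and sufficiency together complete the proof.

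The genuinely hard part is all upstream, concentrated in Lemmas~\ref{LemmaHalfSpaces} and~\ref{Restriction} and in the extension machinery of Proposition~\ref{ProposExtend} and Theorem~\ref{TheorExtens}; at the level of the present theorem the only delicate point is confirming that the three necessary conditions leave no freedom beyond the single scalar $\beta$. The step I expect to be most error-prone is the orthogonal bookkeeping around $\nu_W$: one must be certain that its $\ker l$-component $\nu_W^{\parallel}$ is orthogonal to $W_0$, which holds because $\nu_W\perp W_0$ inside $W$ and $\nu\perp L_0\supseteq W_0$. This orthogonality is exactly what makes $\mathcal{P}_x\nu_W$ a multiple of $\nu_W$, and hence what makes the determination of $\beta$---and with it the uniqueness---go through.
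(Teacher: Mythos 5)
Your proposal is correct and follows essentially the same route as the paper: necessity via Lemmas~\ref{LemmaHalfSpaces} and~\ref{Restriction} (the paper also invokes Lemma~\ref{LemmaBorderProjection}, whose content your appeal to Lemma~\ref{Restriction} subsumes), uniqueness via the orthogonal decomposition and the idempotency demand $\mathcal{P}_x\nu_W=\nu_W$ pinning $\beta=1/\langle\nu_W|\nu\rangle$, and sufficiency via the value-preservation computation $D_x(H)(\mathcal{P}_xQ)=D_x(H)(Q)$. Your remark that $\nu_W^{\parallel}$ lies in $\ker D_x(H)$ and is \emph{orthogonal} to $W_0$ is in fact a correction of a slip in the paper's own derivation (which asserts $\nu_W^{\parallel}\in W_0$), and it is exactly what makes $\mathcal{P}_x\nu_W$ collapse to a multiple of $\nu_W$.
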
 
\begin{flushright} $\square$ \end{flushright}

The applications of Theorem \ref{TheorFinalProjector} are the same for all problems of dimension reduction: 
\begin{itemize}
\item Find Hessian of $H$ and define the Shahshahani inner product $\langle \cdot | \cdot \rangle_x$.
\item Define the unit normal  vector $\nu=\nu(x)$ to $L_0=\ker D_x (H(x))$. For each $x\in U$ this vector is the normalized antigradient $H$ in the inner product $\langle \cdot | \cdot \rangle_x$:  $\nu(x)=-\frac{{\rm grad}_x H(x)}{\|{\rm grad}_x H(x)\|}$.
\item For a given ansatz manifold $M$ and any $x\in M$ find the unit normal vector $\nu_W=\nu_W(x) \in T_x(M)$ to the subspace $W_0=T_x(M)\cap \ker D_x(H_M)$. For each $x\in M$ this vector is the normalized antigradient  $H_M$ in the restriction of the inner product $\langle \cdot | \cdot \rangle_x$ onto $T_x(M)$: $\nu_W(x)=-\frac{{\rm grad}_x H_M(x)}{\|{\rm grad}_x H_M(x)\|}$.
\item Find the orthogonal decomposition $L_0=W_0\oplus  \{y\in L_0| y\perp W_0\}$.
\item Use the projector (\ref{ProjectorFin}) for any dissipative vector field.
\end{itemize}

The only computationally expensive task in high dimensionality may be the orthogonal decomposition for a given ansatz manifold. Calculation of gradients requires several comments. Gradient of a function in a Euclidean space is the  Riesz representation of its differential and depends on the inner product. By definition, for the Shahshahani inner product $\langle \cdot | \cdot \rangle_x$, $D_x(H(x)(Q)=\langle{\rm grad}_x H(x), Q \rangle_x$. Similarly, $D_x(H_M(x)(Q)=\langle{\rm grad}_x H_M(x), Q \rangle_x$, but for ${\rm grad}_x H_M(x)$ all calculations are performed in the space $T_x(M)$ and the vector of gradient belongs to this space. In general, ${\rm grad}_x H_M(x)\neq {\rm grad}_x H(x)$. The standard coordinate representation of the gradient as the vector of partial derivatives is valid only in the orthonormal basis in the Shahshahani inner product. It may be worth mentioning that the antigradient in the Shahshahani inner product is the direction of descent in Newton's method for minimizing a function.

Indeed, let $e_x$ be a gradient $H(x)$ at point $x\in U$ with respect to the inner product
$\langle \cdot | \cdot \rangle_x$. Let us use the initial (standard) coordinates in $\mathbb{R}^n$. Recall that $\langle y | z \rangle_x=(y , {\rm Hes}(H)z)$, where ${\rm Hes}(H)$ is the Hessian matrix of second-order partial derivatives of $H$ and $(y, z )=\sum y_i z_i$ is the standard inner product in $\mathbb{R}^n$. The gradient vector $e_x$ is defined by equations:
$$\sum_{i=1}^n \frac{\partial H}{\partial x_i} Q_i=\sum_{i=1}^n Q_i \left(\sum_{j=1}^n ({\rm Hes}_x(H))_{ij} (e_x)_j\right)  \mbox{  for all  } Q.$$
Vectors $Q$ can be excluded from these equations and we obtain
$$\nabla_x H={\rm Hes}_x(H) e_x; \quad e_x=({\rm Hes}_x(H))^{-1}\nabla_x H,$$
where $\nabla_x H$ is the vector of partial derivatives of $H$ in the initial coordinate system. 
Finally, $\nu $ in the  definition of projector (\ref{ProjectorFin}) is: 
$$\nu=-\frac{e_x}{\sqrt{\langle e_x | e_x\rangle_x}}.$$ 

The similar calculations in an internal coordinate system on an ansatz manifold $M$ give  us the unit normal vector to $\ker D_x H_M(x)$,  $\nu_W\in T_x(M)$.

For convenience, in the next subsection we collect the most popular functions $H(x)$ and their Hessian matrices.

\subsection{Shahshahani inner products and gradients for several popular divergences \label{SecMorimoto}}

Many of applied Lyapunov function are particular cases of the following family:

\begin{equation}\label{Morimoto}
H(x)=\sum_i x_i^{\rm eq}f\left(\frac{x_i}{x_i^{\rm eq}}\right),
\end{equation} 
where $f$ is a convex function on the positive semi-axis.

These functions were introduced by R\'{e}nyi \cite{Renyi1961} in the same work, where he introduced the R\'{e}nyi entropy. He proved that they are Lyapunov functions for Markov kinetics. It is noteworthy that the function $H(x)$ (\ref{Morimoto}) depends only on the current value of $x$ and on the equilibrium value $x^{\rm eq}$, and does not depend on the kinetic coefficients, which are usually not as well known as the equilibrium. There exist no other Lyapunov functions for the Markov kinetics with this property (independence of kinetic constants). This characterisation of $H(x)$ was proven by P. Gorban in 2003 \cite{PGorban2003} and then independently by S.-I. Amari in 2009 \cite{Amari2009}. These functions were also rediscovered by Morimoto \cite{Morimoto1963} and studied further by Csiszar \cite{Csiszar1963}. They are known as $f$-divergences or the Csiszar-Morimoto relative entropies.

The derivatives of the $f$-divergences (\ref{Morimoto}) are
$$\frac{\partial H}{\partial x_i}=f'\left(\frac{x_i}{x_i^{\rm eq}}\right),$$
where $f'$ is the derivative of $f$.

The Hessian matrix ${\rm Hes}_x(H)$ is
\begin{equation}
\frac{\partial^2 H}{\partial x_i \partial x_j}= \delta_{ij}\frac{1}{x_i^{\rm eq}}f''\left(\frac{x_i}{x_i^{\rm eq}}\right),
\end{equation}
where $f''$ is the second derivative of $f$.
This matrix is diagonal, therefore, the corresponding Shahshahani inner product is very simple:
$$\langle z | y \rangle_x= \sum_i f''\left(\frac{x_i}{x_i^{\rm eq}}\right) \frac{z_i y_i}{x_i^{\rm eq}}.$$
The gradient of $H(x)$ with respect to this inner product is a vector $e_x$ with components
$$e_{xi}=x_i^{\rm eq} \frac{f'\left(\frac{x_i}{x_i^{\rm eq}}\right)}{f''\left(\frac{x_i}{x_i^{\rm eq}}\right)}$$

If $f(z)=z\ln z$ then the $f$-divergence (\ref{Morimoto}) is the Kullback--Leibler divergence very popular both in kinetics and machine learning:
$$H(x)=\sum_i x_i \ln \left(\frac{x_i}{x_i^{\rm eq}}\right).$$
For this function,
$$\frac{\partial H}{\partial x_i}=1+\ln \left(\frac{x_i}{x_i^{\rm eq}}\right), \quad \frac{\partial^2 H}{\partial x_i \partial x_j}= \frac{\delta_{ij}}{x_i}, \quad \langle z | y \rangle_x = \sum_{i}\frac{z_i y_i}{x_i}, \quad e_{xi}=x_i\left(1+ \ln\left(\frac{x_i}{x_i^{\rm eq}}\right)\right). $$

If $f(z)= z(\ln z-1)$ then the formulas become even simpler:
$$H(x)=\sum_i x_i \left(\ln \left(\frac{x_i}{x_i^{\rm eq}}\right)-1\right).$$
$$\frac{\partial H}{\partial x_i}=\ln \left(\frac{x_i}{x_i^{\rm eq}}\right), \quad \frac{\partial^2 H}{\partial x_i \partial x_j}= \frac{\delta_{ij}}{x_i}, \quad \langle z | y \rangle_x = \sum_{i}\frac{z_i y_i}{x_i}, \quad e_{xi}=x_i\ln\left(\frac{x_i}{x_i^{\rm eq}}\right). $$

If $f(z)=-\ln z$ then $H(x)$ is the relative Burg entropy:
$$H(x)=-\sum_i x_i^{\rm eq}\ln \left(\frac{x_i}{x_i^{\rm eq}}\right).$$
$$\frac{\partial H}{\partial x_i}=-\frac{x_i^{\rm eq}}{x_i}, \quad \frac{\partial^2 H}{\partial x_i \partial x_j}= \delta_{ij}\frac{x_i^{\rm eq}}{x_i^2}, \quad \langle z | y \rangle_x = \sum_{i}\frac{z_i y_i x_i^{\rm eq}}{x_i^2}, \quad e_{xi}=-x_i. $$

For all other examples of $f$-divergences calculations are essentially the same. Calculations  on an arbitrary subspace requires more efforts with orthogonalisation, at least, partial.

\section{Beyond manifolds: monotone trees \label{monoton}}

Approximating data sets or dynamical systems by projection onto a manifold often seems suboptimal, since a relatively high-dimensional approximation may contain too much ``air'' (empty or nearly empty space irrelevant to the applied problem), while a manifold of lower dimension and limited curvature may lose the quality of the approximation. 

The idea of data approximation by ``principal graphs and trees'' is well-elaborated and there exists an open source  software for use in many areas \cite{Albergante2020}, from single cell omics in bioinformatics to dynamic phenotyping of diseases in clinical medicine and even in astronomy for analysis of distributions of galaxies. Nevertheless, projection of dynamics onto principal graphs and trees remains an open problem \cite{GolovenBac2020, Bell2024}.

In this section, we present a solution of the problem of projection dynamics onto trees with preservation of dissipativity.  Trees in the graph-theoretical sense may be also viewed as topological spaces by representation of each edge as an arc. A tree is smoothly immersed in U if each arc is a smooth curve. In a tree, each two nodes are connected by a unique path that consists of arcs (edges). A tree is monotone with respect to function $H$ if $H$ is a strongly monotone functions along each of these paths and its derivative along edges do not vanish.

The problem of preservation of dissipativity for projection of any dissipative vector field on a monotone tree is de-facto solved in Section \ref{SSeq1D}. We should just apply Theorem \ref{Theor1Dpro} to the paths on the tree. Technically, the result is very simple. Let $T\subset U$ be a monotone tree. Find the node   $x^*\in T$ with minimal value of $H$. This node is unique because each two nodes are connected by unique path and $H$ is a strongly monotone function along these paths. Let $x$ be a point on the tree and the line $L$ be a path in $T$ between $x$ and $x^*$. Select $H$ as an internal coordinate on the $L$ ($H_L$). For every dissipative vector field $Q$ define motion on this path by the condition 
$$\frac{dH_L(x)}{dt}=D_x (H(x))(Q).$$
According to Theorem \ref{Theor1Dpro} this definition gives the unique projector of any dissipative vector fields into dissipative motion along the path $L$. Note that the path $L$ is piecewise smooth, therefore we have to apply Theorem to arcs and then glue motions in nodes.

For non-monotonic trees, the problem of flow distribution at bifurcation points arises. This problem is stated as important for applied work \cite{GolovenBac2020, Bell2024}. Here we cannot even hypothesise  a deterministic solution. Perhaps, to reduce dissipative systems to non-monotonic graphs, it is necessary to introduce stochastic models with randomized flow distribution at branching points.

\section{Conclusion and outlook \label{Concl}}

We have presented a detailed solution to the projection problem for the reduction of a dissipative system with incomplete information about vector fields. For this purpose, we have found projectors that project any dissipative vector field into a dissipative vector field on ansatz manifolds. This work is a further mathematical development of previous work on physical and chemical kinetics. The general form of projectors is universal and can be used in any field to reduce dimensionality when there is no detailed knowledge of the vector field. Further refinement of the model can be performed after reducing the dimension.  There are many possible applications, from control and nonequilibrium thermodynamics to machine learning.

The answer to the problem is presented in Theorem \ref{TheorFinalProjector}. It is simple and transparent. Calculations of the gradient of $H$ with respect to the Shahshahani inner product $\langle \cdot | \cdot \rangle_x$ in $\mathbb{R}^n$ are straightforward for many popular  Lyapunov functions that are useful both in machine learning and in non-equilibrium thermodynamics (Section \ref{SecMorimoto}). For an ansatz manifold of small dimension or codimension, calculations are also simple (see, for example, Section \ref{SSeq1D}). Nevertheless, calculations of projections for an ansatz manifold of intermediate dimension (neither small dimension, nor small codimension) require additional efforts for development of optimised algorithms.

Reduces dynamics beyond manifolds is very attractive in many applications. The algorithms for data approximation by graphs and, in particular, by trees are well developed and the open source software is  available (for its description and further references we refer to  \cite{Albergante2020}). Despite of these successes, the problem of projection of dynamics onto ansatzes that are not manifolds has been repeatedly described as important in applications and unsolved \cite{GolovenBac2020, Bell2024}. Here we have taken the first step towards solving this problem: we have introduced monotone trees and found a projection operator that projects any dissipative dynamics in space into the dissipative dynamics on the tree. The problem of projection onto non-monotonic trees and graphs requires a change in the framework of consideration: it seems necessary to consider not deterministic dynamics, but dynamics with stochastic redistribution of the phase flow at bifurcation points. Stochastic modeling is beyond the scope of this article, but is certainly the first task on the ``to do'' list.

Finally, let us repeat the main message of the paper: the projection of dissipative dynamics with preservation of dissipativity requires a special design of the projector. Surprisingly, a universal projector exists, and it is unique.

\section{Acknowledgements}

This work was supported by the Analytical Center for the Government of the Russian Federation (agreement identifier 000000D730324P540002, grant No 70-2023-001320 dated 27.12.2023).

\section*{CRediT authorship contribution statement}

{\bf Sergey V. Stasenko}: Writing – review \& editing, Investigation, Formal analysis. {\bf Alexander N. Kirdin}: Writing – review \& editing, Writing – original draft, Methodology, Investigation, Formal analysis, Conceptualization.

\section*{Declaration of competing interest}
 
 The authors declare that they have no known competing financial interests or personal relationships that could have appeared  to influence the work reported in this paper

\section*{Data availability}

No data have been used in the paper.

\end{document}